\documentclass[11pt]{article}

\usepackage[a4paper,margin=1in]{geometry}
\usepackage{amsmath,amssymb,amsthm,mathtools,bm}
\usepackage{booktabs}
\usepackage{graphicx}
\usepackage{enumitem}
\usepackage{hyperref}
\usepackage{booktabs}
\usepackage{tabularx}
\usepackage{array}

\newtheorem{theorem}{Theorem}[section]
\newtheorem{lemma}[theorem]{Lemma}
\newtheorem{corollary}[theorem]{Corollary}
\newtheorem{proposition}[theorem]{Proposition}
\theoremstyle{definition}
\newtheorem{definition}[theorem]{Definition}
\newtheorem{assumption}[theorem]{Assumption}
\newtheorem{example}[theorem]{Example}
\newtheorem{remark}[theorem]{Remark}

\newcommand{\R}{\mathbb{R}}
\newcommand{\E}{\mathbb{E}}
\newcommand{\Var}{\operatorname{Var}}
\newcommand{\Bias}{\operatorname{Bias}}
\newcommand{\MISE}{\operatorname{MISE}}
\newcommand{\AMISE}{\operatorname{AMISE}}
\newcommand{\supp}{\operatorname{supp}}
\newcommand{\co}{\operatorname{co}}
\newcommand{\esssup}{\operatorname*{ess\,sup}}
\newcommand{\norm}[1]{\left\lVert #1\right\rVert}
\newcommand{\abs}[1]{\left\lvert #1\right\rvert}

\newcommand{\tr}{\operatorname{tr}}

\title{Weak-Curvature AMISE and Plug-in Bandwidth Selection for Kernel Density Estimation}
\author{Alireza Kabgani\thanks{Corresponding author. Department of Mathematics, University of Antwerp, Antwerp, Belgium. Email: alireza.kabgani@uantwerp.be}
\and Elaheh Lotfian\thanks{Antwerp, Belgium. Email: e.lotfian@gmail.com} }

\date{}

\begin{document}
\maketitle

\begin{abstract}
Kernel density estimation risk expansions are commonly expressed through the integrated squared curvature term that enters second-order AMISE and plug-in bandwidth rules. This paper develops a weak-curvature formulation of this classical calculation for densities whose second derivative exists weakly rather than as a continuous classical function. We prove that if a density has square-integrable weak curvature, then the standard second-order AMISE expansion, oracle bandwidth order, and kernel-dependent optimality calculation remain valid with the curvature functional understood in the weak sense. The class $C^{1,1}(\mathbb{R})\setminus C^2(\mathbb{R})$ serves as a concrete and practically relevant subclass: the first derivative is Lipschitz, while curvature may be kinked, discontinuous, or undefined at isolated points. Building on this formulation, we introduce a generalized-curvature plug-in (GCPI) bandwidth selector. The selector estimates the weak-curvature functional by a pilot density-derivative estimator with a leave-one-out U-statistic correction and substitutes this estimate into the AMISE bandwidth formula. We prove first-order oracle equivalence under ratio-consistent weak-curvature estimation and establish consistency of the proposed U-statistic curvature estimator under explicit pilot-bandwidth conditions. We also give a scalar-bandwidth multivariate extension based on weak Hessians and illustrate the theory through nonsmooth density examples, simulations, and a real-data application.
\end{abstract}

\noindent\textbf{Keywords:} Kernel density estimation, nonparametric estimation, $C^{1,1}$ smoothness, AMISE, plug-in bandwidth selection

\noindent\textbf{MSC 2020:} Primary 62G07, 62G20; secondary 49J52, 26A16.


\section{Introduction}
\label{sec:introduction}

Let $X_1,\ldots,X_n$ be independent observations in $\mathbb{R}^d$ with common density $f:\mathbb{R}^d\to[0,\infty)$. Kernel Density Estimation (KDE) is a standard nonparametric method for estimating $f$; see, for example, \cite{Parzen1962,Rosenblatt1956,Scott2015,Silverman1986,WandJones1995}. For a scalar bandwidth $h>0$ and a kernel $K:\mathbb{R}^d\to\mathbb{R}$, the multivariate kernel density estimator is
\[
\widehat f_h(x) := \frac{1}{nh^d} \sum_{i=1}^n K\left(\frac{x-X_i}{h}\right), \qquad x\in\mathbb{R}^d.
\]
Many textbook derivations of the bias, Mean Integrated Squared Error (MISE), and Asymptotic Mean Integrated Squared Error (AMISE) use a pointwise second-order Taylor expansion and therefore assume that the unknown density is twice continuously differentiable. Under this convenient assumption, one obtains the usual leading bias term, the classical AMISE formula, the bandwidth rate $n^{-1/5}$ in one dimension, and the scalar-bandwidth rate $n^{-1/(d+4)}$ in $d$ dimensions. The Epanechnikov kernel then arises as the classical optimal nonnegative second-order kernel in the AMISE sense; see \cite{Epanechnikov1969,Scott2015,Silverman1986,Tsybakov2009,WandJones1995}.

It is important to distinguish this textbook route from the broader theory of density estimation under weak smoothness. Kernel, wavelet, and adaptive density estimation over H\"older, Sobolev, and Besov-type classes are classical; see, for example, \cite{DonohoJohnstoneKerkyacharianPicard1996,GineNickl2016,GoldenshlugerLepski2011,GoldenshlugerLepski2014,KerkyacharianPicard1993,KerkyacharianPicardTribouley1996,Tsybakov2009}. In such settings, one can obtain bias and risk bounds under assumptions much weaker than pointwise $C^2$ smoothness. Thus the present paper does not claim that $C^{1,1}$ is minimal for KDE consistency, nor that the rate $n^{-4/5}$ is new in a weak-smoothness framework.

Our objective is more specific. We revisit the classical second-order AMISE formula and the associated plug-in bandwidth calculation in a setting where the curvature exists naturally as a weak derivative. In one dimension, the relevant functional is
\[
R(f'') := \int_{\mathbb{R}}\{f''(x)\}^2\,dx,
\]
where $f''$ is interpreted as the weak second derivative. The concrete regularity class emphasized in this paper is $C^{1,1}(\mathbb{R})\setminus C^2(\mathbb{R})$: the first derivative is Lipschitz continuous, so a weak second derivative exists and is locally bounded, while the classical second derivative may fail to exist at some points or may be discontinuous. This class includes natural kinked-curvature densities arising from threshold or regime-change models, heterogeneous populations, robust mixture specifications, and boundary-affected density estimation; see, for example, \cite{ClineHart1991,Hansen2017,Jones1993,McLachlanPeel2000}.

The first contribution is a weak-curvature derivation of the classical AMISE expansion. The usual pointwise $C^2$ Taylor expansion is replaced by a weak integral Taylor argument and $L^2$ translation continuity. Under the Sobolev-type condition $f\in L^2(\mathbb{R})$ and $f''\in L^2(\mathbb{R})$ in the weak sense, we prove that the classical AMISE expansion remains valid with $R(f'')$ understood as weak curvature. Consequently, the standard bandwidth order $n^{-1/5}$, the MISE rate $n^{-4/5}$, and the classical kernel-dependent optimization argument are retained without assuming a continuous classical second derivative. The $C^{1,1}\setminus C^2$ setting is used to give a concrete nonsmooth class and a local curvature interpretation.

The second and more methodological contribution is a generalized-curvature plug-in (GCPI) bandwidth selector. Classical plug-in bandwidth selectors replace the unknown curvature quantity in the AMISE formula by an estimate; see \cite{Chiu1996,HallSheatherJonesMarron1991,Heidenreich2013,JonesMarronSheather1996,ParkMarron1990,Sheather2004,SheatherJones1991}. In the present formulation, the target is explicitly the weak-curvature functional $R(f'')$. We estimate it using a pilot density derivative estimator and a leave-one-out U-statistic correction. We prove that any ratio-consistent estimator of $R(f'')$ yields a plug-in bandwidth that is first-order AMISE-equivalent to the oracle bandwidth, and we establish consistency of the proposed U-statistic curvature estimator under explicit pilot-bandwidth conditions. This connects the present work to the literature on integrated squared density derivative estimation, including \cite{BickelRitov1988,HallMarron1987,JonesSheather1991,VanEsHoogstrate1994,Wu1995}.

There is also a line of work on nonsmooth MISE and integrated squared error expansions, including settings where discontinuities or jumps in higher-order derivatives affect the leading terms; see, for example, \cite{VanEeden1985,VanEsHoogstrate1994,VanEs1997}. The present paper is complementary to that literature. Its focus is not on a new discontinuity correction or a new minimax theory, but on showing that the ordinary second-order AMISE and plug-in bandwidth logic can be written cleanly in terms of weak curvature for densities in $C^{1,1}\setminus C^2$.

The multivariate extension is developed for scalar bandwidths. In this case, the weak second derivative is replaced by the weak Hessian $D^2f$, and the leading curvature term is expressed through
\[
\mathcal L_K f(x) := \tr\{M_2(K)D^2f(x)\}, \qquad M_2(K) := \int_{\mathbb{R}^d}uu^\top K(u)\,du.
\]
Under Lipschitz-gradient regularity and $D^2f\in L^2(\mathbb{R}^d)$ in the weak sense, we obtain the scalar-bandwidth multivariate AMISE expansion and recover the usual rate $n^{-4/(d+4)}$. Full bandwidth-matrix theory, anisotropic adaptation, and multivariate plug-in bandwidth selection are left for future work.

The main contributions are as follows.
\begin{itemize}
    \item We give a self-contained weak-curvature derivation of the classical second-order AMISE expansion under the Sobolev-type condition $f\in L^2(\mathbb{R})$ and $f''\in L^2(\mathbb{R})$ in the weak sense.
    \item We clarify the distinction between pointwise $C^2$ smoothness, $C^{1,1}$ regularity, and weak $L^2$ curvature, and use $C^{1,1}\setminus C^2$ densities as concrete examples where the weak-curvature interpretation is meaningful.
    \item We introduce a generalized-curvature plug-in bandwidth selector and prove first-order oracle equivalence under ratio-consistent estimation of $R(f'')$.
    \item We prove consistency of a leave-one-out U-statistic estimator of the weak-curvature functional under explicit pilot-bandwidth conditions.
    \item We extend the weak-curvature AMISE argument to multivariate densities with Lipschitz gradients and square-integrable weak Hessians.
\end{itemize}

The paper also gives examples of densities in $C^{1,1}\setminus C^2$, numerical experiments, and a real-data illustration. These examples are included to clarify the scope of the theory and to show how the weak-curvature plug-in rule behaves beyond the formal asymptotic setting.

The remainder of the paper is organized as follows. Section~\ref{sec:preliminaries} introduces the notation, kernel assumptions, weak derivatives, and the $C^{1,1}$ framework. Section~\ref{sec:bias-mise} proves the basic bias and MISE bounds. Section~\ref{sec:amise} derives the weak-curvature AMISE expansion. Section~\ref{sec:gcpi} develops the GCPI bandwidth selector and the U-statistic curvature estimator. Section~\ref{sec:multivariate} gives the multivariate scalar-bandwidth extension. Section~\ref{sec:kernel-optimality} discusses kernel optimality. Section~\ref{sec:generalized-second-order} gives a local generalized second-order interpretation. Section~\ref{sec:examples} presents examples, Section~\ref{sec:numerics} gives numerical illustrations, and Section~\ref{sec:discussion} concludes.

\section{Preliminaries}\label{sec:preliminaries}

We begin by fixing the notation and assumptions used in the subsequent analysis. In particular, we introduce the kernel conditions, the $C^{1,1}$ regularity, and weak second derivatives. 
Unless otherwise stated, Sections~\ref{sec:preliminaries}--\ref{sec:gcpi} are formulated in the univariate setting. The multivariate notation is introduced separately in Section~\ref{sec:multivariate}. 

Throughout the paper, all integrals are understood in the Lebesgue sense, and functions that agree almost everywhere are identified.
For $1\le p<\infty$, let $L^p(\mathbb{R})$ denote the space of measurable functions $g:\mathbb{R}\to\mathbb{R}$ such that
\[
\|g\|_p :=\left(\int_{\mathbb{R}} |g(x)|^p\,dx\right)^{1/p}<\infty.
\]
We also write $\|g\|_{\infty} :=\esssup_{x\in\mathbb{R}} |g(x)|$,
when $g$ is essentially bounded. For $g\in L^2(\mathbb{R})$, define
\[
R(g) :=\int_{\mathbb{R}} g(x)^2\,dx=\|g\|_2^2.
\]
In particular, $R(f'')$ denotes the weak-curvature functional when $f''$ is interpreted as a weak second derivative (see Subsection~\ref{subsec:c11}).
For a measurable function $g:\mathbb R\to\mathbb R$, we write
\[
 \supp g:=\overline{\{x\in\mathbb R:\; g(x)\ne0\}},
\]
where the closure is taken in $\mathbb R$.

\subsection{Kernel assumptions}

For a kernel $K$, define its $j$th moment by
\[
\mu_j(K):=\int_{\mathbb{R}} u^j K(u)\,du,
\]
whenever the integral is absolutely finite, that is,
\[
\int_{\mathbb{R}} |u|^j |K(u)|\,du <\infty.
\]

We impose the following standing assumptions.
\begin{assumption}[\textbf{Kernel conditions}]\label{ass:kernel}
The kernel $K:\mathbb{R}\to\mathbb{R}$ satisfies
\begin{enumerate}[label=(K\arabic*)]
    \item\label{ass:kernel:1} $K\in L^1(\mathbb{R})\cap L^2(\mathbb{R})$;
    \item\label{ass:kernel:2} $\int_{\mathbb{R}}K(u)\,du=1$;
    \item\label{ass:kernel:3} $\int_{\mathbb{R}}uK(u)\,du=0$;
    \item\label{ass:kernel:4} $\int_{\mathbb{R}}u^2|K(u)|\,du<\infty$.
\end{enumerate}
No nonnegativity assumption is imposed on $K$ unless stated explicitly. Thus the assumptions allow signed kernels, although nonnegative kernels are considered when kernel optimality is discussed. If $K$ is symmetric, then \ref{ass:kernel:3} follows automatically whenever $\int_{\mathbb{R}} |uK(u)|\,du<\infty$. This integrability condition is implied by \ref{ass:kernel:1} and \ref{ass:kernel:4}.
\end{assumption}

For $h>0$, define the rescaled kernel $K_h(x) :=h^{-1}K(x/h)$.
In the univariate setting, the kernel density estimator can then be written as
\[
\widehat f_h(x) = \frac{1}{n}\sum_{i=1}^n K_h(x-X_i).
\]
Equivalently,
\[
\widehat f_h(x) = \frac{1}{nh} \sum_{i=1}^n K\left(\frac{x-X_i}{h}\right).
\]
Taking expectation gives
\begin{equation}\label{eq:expectation-convolution} 
\mathbb{E}\widehat f_h(x) = (K_h*f)(x) = \int_{\mathbb{R}}K(u)f(x-hu)\,du.
\end{equation}

\subsection{$C^{1,1}$ functions and weak second derivatives}
\label{subsec:c11}
We next specify the regularity class used in the univariate analysis. We also make explicit the weak-derivative convention used throughout the paper. 

A function $g\in L^1_{\mathrm{loc}}(\mathbb R)$ is called the weak second derivative of $f$ if
\[
 \int_{\mathbb R} g(x)\varphi(x)\,dx = \int_{\mathbb R} f(x)\varphi''(x)\,dx,
\]
for every smooth compactly supported test function $\varphi$. In this case we write $g=f''$. Weak derivatives are unique up to equality almost everywhere; see, for example \cite{EvansGariepy2015}.

\begin{definition}[\textbf{$C^{1,1}$ smoothness}]
A continuously differentiable function $f:\R\to\R$ belongs to $C^{1,1}(\R)$ if its derivative $f'$ is Lipschitz continuous on $\R$, that is, if there exists $L\ge0$ such that
\begin{equation}\label{eq:Lipschitz-gradient} 
\abs{f'(x)-f'(y)}\le L\abs{x-y}, \qquad \forall x,y\in\R.
\end{equation}
The smallest such constant is denoted by $\operatorname{Lip}(f')$.
\end{definition}

If $f\in C^{1,1}(\mathbb R)$, then $f'$ is locally absolutely continuous. Hence $f'$ is differentiable almost everywhere, and its almost-everywhere derivative coincides with the weak derivative of $f'$. Equivalently, $f$ has a weak second derivative, still denoted by $f''$. Moreover,
\[
 f'(y)-f'(x)=\int_x^y f''(t)\,dt,\qquad x,y\in\mathbb R.
\]
Since $f'$ is Lipschitz continuous with constant $\operatorname{Lip}(f')$, the weak second derivative satisfies
\[
 \|f''\|_{\infty}\le \operatorname{Lip}(f').
\]
Thus a $C^{1,1}$ function has an essentially bounded weak second derivative. However, this $L^\infty$ bound alone does not imply $f''\in L^2(\mathbb R)$ on the whole real line. Therefore, square-integrability of the weak second derivative is imposed separately when integrated risk and AMISE results are derived.

Whenever
\begin{equation}\label{eq:rel_weak_curv}
R(f'') := \int_{\mathbb{R}}\{f''(x)\}^2\,dx,
\end{equation}
appears below, $f''$ is understood as the weak second derivative of $f$. Since functions are identified up to equality almost everywhere, the value assigned to $f''$ on a Lebesgue null set is immaterial for this functional.

\begin{remark}[\textbf{$C^{1,1}$ versus $C^2$ and weak smoothness}]
\label{rem:holder-sobolev}
The condition $C^{1,1}$ should be distinguished from both classical $C^2$ smoothness and Sobolev-type weak smoothness. Functions in $C^{1,1}(\mathbb{R})$ may fail to be twice differentiable at some points, and hence need not belong to $C^2(\mathbb{R})$. On the other hand, the integrated AMISE expansion below only requires the weak-curvature condition $f\in L^2(\mathbb R)$ and $f''\in L^2(\mathbb R)$; this is weaker than global $C^{1,1}$ regularity and is naturally expressed in Sobolev language. The role of $C^{1,1}$ in this paper is to provide pointwise bias control and a concrete nonsmooth class in which curvature is locally bounded but may be kinked or discontinuous. Thus the purpose is to connect the interpretable class $C^{1,1}\setminus C^2$ with the classical AMISE calculation through weak curvature.
\end{remark}

The following integral Taylor representation replaces the classical pointwise $C^2$ Taylor formula with an identity involving the weak second derivative.

\begin{lemma}[\textbf{Integral Taylor formula}]\label{lem:integral-taylor}
Let $f\in C^{1,1}(\R)$. Then, for every $x,v\in\R$,
\begin{equation}\label{eq:integral-taylor} 
f(x+v)=f(x)+vf'(x)+v^2\int_0^1(1-t)f''(x+tv)\,dt,
\end{equation}
where $f''$ denotes any measurable representative of the weak second derivative of $f$.
\end{lemma}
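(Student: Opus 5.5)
We would prove the integral Taylor formula by applying the fundamental theorem of calculus twice, each time to a locally absolutely continuous function, and then swapping the order of integration.

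\textbf{Step 1 (reduction and first application).} If $v=0$ both sides equal $f(x)$, so assume $v\neq0$. Since $f\in C^{1,1}(\R)\subset C^1(\R)$, the classical fundamental theorem of calculus, applied along $t\mapsto f(x+tv)$, gives
\[
f(x+v)-f(x)=v\int_0^1 f'(x+sv)\,ds .
\]

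\textbf{Step 2 (second application).} As recalled in Subsection~\ref{subsec:c11}, $f'$ is locally absolutely continuous and satisfies $f'(y)-f'(x)=\int_x^y f''(\tau)\,d\tau$ for any measurable representative of the weak second derivative. The substitution $\tau=x+tv$ is affine, so it is legitimate for Lebesgue integrals and covers both signs of $v$. It gives
\[
f'(x+sv)=f'(x)+v\int_0^s f''(x+tv)\,dt .
\]
Inserting this into Step 1 yields
\[
f(x+v)=f(x)+vf'(x)+v^2\int_0^1\!\int_0^s f''(x+tv)\,dt\,ds .
\]

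\textbf{Step 3 (Fubini).} This is the only step needing care. We apply Tonelli/Fubini on the triangle $\{0\le t\le s\le 1\}$. The integrand is jointly measurable, since it is the composition of the measurable $f''$ with the affine map $(s,t)\mapsto x+tv$. It is also bounded by $\|f''\|_\infty\le\operatorname{Lip}(f')$, so it is integrable on the triangle. Exchanging the order of integration, the inner integral over $s\in[t,1]$ equals $1-t$, which gives the stated kernel $(1-t)$ and hence \eqref{eq:integral-taylor}.

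\textbf{Independence of the representative.} Two representatives of $f''$ differ on a Lebesgue-null set $N$. The set $\{t\in[0,1]: x+tv\in N\}$ is the preimage of $N$ under an affine bijection, so it is null. Hence the right-hand side of \eqref{eq:integral-taylor} does not depend on the choice of representative, which justifies the phrase ``any measurable representative''.

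We expect no genuine obstacle. The points to state carefully are the validity of the change of variables for negative $v$ and the integrability that licenses Fubini, and boundedness of $f''$ handles the latter.
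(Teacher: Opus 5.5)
Your proposal is correct and follows essentially the same route as the paper: apply the fundamental theorem of calculus twice using the local absolute continuity of $f'$, exchange the order of integration over the triangle to produce the kernel $(1-t)$, and note that the affine map $t\mapsto x+tv$ pulls null sets back to null sets, so the choice of representative does not matter. You also make explicit two points the paper leaves implicit: the sign of $v$ in the change of variables, and the justification of Fubini via the essential bound $\|f''\|_\infty\le\operatorname{Lip}(f')$.
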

\begin{proof}
Since $f'$ is locally Lipschitz, it is absolutely continuous on every compact interval \cite[Chapter 3]{EvansGariepy2015}. Let $g$ be a measurable representative of the weak derivative of $f'$. Then, 
\[
f(x+v)-f(x)=v\int_0^1 f'(x+tv)\,dt, \qquad \forall x, v\in \R.
\]
Moreover, for each $t\in[0,1]$,
\[
f'(x+tv)-f'(x)=v\int_0^t g(x+sv)\,ds.
\]
Substituting this identity into the previous display gives
\[
\begin{aligned}
    f(x+v)-f(x)-vf'(x)&=v\int_0^1 \left(f'(x+tv)-f'(x)\right)\,dt  \\
    &=v^2\int_0^1\int_0^t g(x+sv)\,ds\,dt.
\end{aligned}
\]
Changing the order of integration over the triangle $\{(s,t):0\le s\le t\le1\}$ yields
\[
f(x+v)-f(x)-vf'(x)=v^2\int_0^1(1-s)g(x+sv)\,ds.
\]
Writing $g=f''$ gives \eqref{eq:integral-taylor}. The formula is unaffected by changing $g$ on a Lebesgue null set, since the affine map $s\mapsto x+sv$ preserves null sets when $v\ne0$; the case $v=0$ is immediate.
\end{proof}

\section{Bias and MISE under $C^{1,1}$ Smoothness}\label{sec:bias-mise}

\subsection{Pointwise bias}
The next result shows that the usual second-order pointwise bias bound does not require a pointwise $C^2$ Taylor expansion. The $C^{1,1}$ condition is sufficient.

\begin{theorem}[\textbf{Pointwise bias under $C^{1,1}$ smoothness}]\label{thm:pointwise-bias}
Assume that $K$ satisfies Assumption \ref{ass:kernel} and that $f\in C^{1,1}(\R)$ with Lipschitz constant $L=\operatorname{Lip}(f')$. Then, for every $x\in\R$ and $h > 0$,
\begin{equation}\label{eq:pointwise-bias-bound} 
\abs{\E\widehat f_h(x)-f(x)}\le\frac{Lh^2}{2}\int_{\R}u^2\abs{K(u)}\,du.
\end{equation}
In particular,
\[
\sup_{x\in\R}\abs{\Bias\{\widehat f_h(x)\}}=O(h^2),
\]
as $h\to 0$.
\end{theorem}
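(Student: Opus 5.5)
The plan is to start from the convolution representation \eqref{eq:expectation-convolution} and to use \ref{ass:kernel:2} to write the bias as a single integral against the kernel:
\[
\E\widehat f_h(x)-f(x)=\int_{\R}K(u)\{f(x-hu)-f(x)\}\,du .
\]
Next, I would subtract the first-order term. Assumption \ref{ass:kernel:3} gives $\int_{\R} uK(u)\,du=0$, and this integral converges absolutely because $|u|\le 1+u^2$ together with \ref{ass:kernel:1} and \ref{ass:kernel:4}. Hence the term $-hu f'(x)$ can be inserted at no cost:
\[
\E\widehat f_h(x)-f(x)=\int_{\R}K(u)\{f(x-hu)-f(x)+hu f'(x)\}\,du .
\]

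The key step is to bound the bracket uniformly, without any appeal to a second derivative at $x$. I would not use Lemma~\ref{lem:integral-taylor} directly, since it involves a chosen representative of $f''$. Instead I would work from the first-order integral identity underlying it, with $v=-hu$:
\[
f(x+v)-f(x)-vf'(x)=v\int_0^1\{f'(x+tv)-f'(x)\}\,dt .
\]
The Lipschitz bound \eqref{eq:Lipschitz-gradient} gives $|f'(x+tv)-f'(x)|\le Lt|v|$. It follows that
\[
|f(x+v)-f(x)-vf'(x)|\le L v^2\int_0^1 t\,dt=\tfrac{L}{2}v^2=\tfrac{L}{2}h^2u^2 .
\]
This step is also where measurability and integrability of the integrand are justified. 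The function $u\mapsto K(u)\{\cdots\}$ is dominated by $\tfrac{L}{2}h^2u^2|K(u)|$, which is integrable by \ref{ass:kernel:4}. So the split above is legitimate, and every integral involved is finite; in particular $K_h*f$ is well defined even though only $f\in C^{1,1}$ is assumed.

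Finally, I would take absolute values inside the integral to obtain \eqref{eq:pointwise-bias-bound}. The bound does not depend on $x$, so taking the supremum over $x$ gives the $O(h^2)$ statement. The only real subtlety is the bookkeeping: the decomposition relies on absolute integrability of each piece, and the domination bound above supplies exactly that.
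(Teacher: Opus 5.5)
Your proof is correct. It differs from the paper's only in how the second-order remainder is controlled.

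The paper applies Lemma~\ref{lem:integral-taylor} with $v=-hu$ to get the exact representation \eqref{eq:bias-integral-representation},
\[
\E\widehat f_h(x)-f(x)=h^2\int_{\R}u^2K(u)\int_0^1(1-t)f''(x-thu)\,dt\,du,
\]
and then uses the almost-everywhere bound $|f''|\le L$. You instead bound the first-order remainder directly, using
\[
f(x+v)-f(x)-vf'(x)=v\int_0^1\{f'(x+tv)-f'(x)\}\,dt
\]
together with the Lipschitz property of $f'$. This is exactly the route the paper itself takes in the multivariate case through \eqref{eq:multi-lipschitz-remainder} in Theorem~\ref{thm:multi-pointwise-bias}.

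Your version is more elementary: it needs no weak derivatives, no absolute continuity of $f'$, and no choice of representative. It also settles the integrability bookkeeping explicitly: $u\mapsto K(u)f(x-hu)$ is integrable, so $(K_h*f)(x)$ is defined and the first-moment term can be split off. The paper passes over this point silently.

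What the paper's route buys is the identity \eqref{eq:bias-integral-representation} itself, with $f''$ kept inside the integral. It is reused in Theorem~\ref{thm:integrated-bias-bound}, where Minkowski's inequality turns it into a bound in terms of $\|f''\|_2$. Your uniform pointwise bound by $L$ cannot give that, since squaring and integrating a constant bound over $\R$ diverges. The identity is also the starting point of Proposition~\ref{prop:generalized-curvature-bound}.
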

\begin{proof}
By \eqref{eq:expectation-convolution} and $\int_{\mathbb{R}}K(u)\,du=1$,

\[
\E\widehat f_h(x)-f(x)=\int_{\R}K(u)\{f(x-hu)-f(x)\}\,du.
\]
Applying Lemma \ref{lem:integral-taylor} with $v=-hu$ gives
\[
f(x-hu)=f(x)-hu f'(x)+h^2u^2\int_0^1(1-t)f''(x-thu)\,dt.
\]
Using $\int uK(u)\,du=0$, we obtain
\begin{equation}\label{eq:bias-integral-representation} 
\E\widehat f_h(x)-f(x)=h^2\int_{\R}u^2K(u)\int_0^1(1-t)f''(x-thu)\,dt\,du.
\end{equation}
Since $f'$ is Lipschitz with constant $L$, the weak second derivative satisfies $|f''|\le L$ almost everywhere. It follows that
\[
\bigl|\E\widehat{f}_h(x) - f(x)\bigr|\le h^2 L \int_{\R} u^2 |K(u)| \left( \int_0^1 (1-t)\, dt \right) du = \frac{L h^2}{2} \int_{\R} u^2 |K(u)|\, du.
\]
The bound is independent of $x$, and hence the bias is uniformly of order $O(h^2)$.
\end{proof}

\subsection{Integrated squared bias}
The pointwise bias bound in Theorem~\ref{thm:pointwise-bias} only uses the Lipschitz constant of $f'$. To obtain an integrated squared bias bound with a finite weak-curvature constant, we also require the weak second derivative to be square integrable.

\begin{assumption}[\textbf{Density regularity}]\label{ass:density}
The density $f$ satisfies
\begin{enumerate}[label=(F\arabic*)]
    \item\label{ass:density:F1} $f\in C^{1,1}(\R)$;
    \item\label{ass:density:F2} the weak second derivative $f''$ belongs to $L^2(\R)$.
\end{enumerate}
\end{assumption}

\begin{remark}\label{rem:density-regularity}
Condition \ref{ass:density:F2} is essential for integrated risk analysis. While Assumption \ref{ass:density:F1} alone is sufficient for the pointwise $O(h^2)$ bias bound in Theorem~\ref{thm:pointwise-bias}, obtaining a finite constant in the integrated squared bias that involves the curvature functional $R(f'') = \|f''\|_2^2$ requires square-integrability of the weak second derivative.
\end{remark}

\begin{theorem}[\textbf{Integrated squared bias bound}]\label{thm:integrated-bias-bound}
Assume that $K$ satisfies Assumption \ref{ass:kernel} and that $f$ satisfies Assumption \ref{ass:density}. Then
\begin{equation}\label{eq:integrated-bias-bound} 
\int_{\R}\left(\E\widehat f_h(x)-f(x)\right)^2\,dx\le\frac{h^4}{4}\left(\int_{\R}u^2\abs{K(u)}\,du\right)^2\norm{f''}_2^2.
\end{equation}
\end{theorem}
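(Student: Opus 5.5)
We start from the bias representation \eqref{eq:bias-integral-representation} obtained in the proof of Theorem~\ref{thm:pointwise-bias}. That proof uses only Lemma~\ref{lem:integral-taylor} together with \ref{ass:kernel:2}--\ref{ass:kernel:3}, so it is available under Assumption~\ref{ass:density}. Write $b_h(x):=\E\widehat f_h(x)-f(x)$ and introduce the finite measure $d\nu(u,t):=u^2|K(u)|(1-t)\,du\,dt$ on $\R\times[0,1]$. Its total mass is
\[
\nu(\R\times[0,1])=\tfrac12\int_{\R}u^2|K(u)|\,du=:\tfrac12 C_K,
\]
which is finite by \ref{ass:kernel:4}. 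With this notation,
\[
|b_h(x)|\le h^2\int_{\R}\int_0^1 |f''(x-thu)|\,d\nu(u,t).
\]
The plan is to control the $L^2(dx)$ norm of this parametric integral by Minkowski's integral inequality, or equivalently by Cauchy--Schwarz with respect to $\nu$ followed by Fubini.

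Concretely, we apply Cauchy--Schwarz in $(u,t)$ with respect to $\nu$:
\[
|b_h(x)|^2\le h^4\,\tfrac{C_K}{2}\int_{\R}\int_0^1 |f''(x-thu)|^2\,d\nu(u,t).
\]
Integrating in $x$ and using Tonelli's theorem (all integrands are nonnegative and measurable), we exchange the order of integration. Translation invariance of Lebesgue measure then gives $\int_{\R}|f''(x-thu)|^2\,dx=\|f''\|_2^2$ for every fixed $(u,t)$; this holds even when $thu=0$. Hence
\[
\int_{\R} b_h(x)^2\,dx\le h^4\,\tfrac{C_K}{2}\cdot\tfrac{C_K}{2}\,\|f''\|_2^2=\tfrac{h^4}{4}C_K^2\|f''\|_2^2,
\]
which is exactly \eqref{eq:integrated-bias-bound}.

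The only delicate point is measurability and well-definedness. The map $(x,u,t)\mapsto f''(x-thu)$ must be jointly measurable, and the representation must not depend on the choice of representative of $f''$. Joint measurability holds because $f''$ is Borel after modification on a null set, and $(x,u,t)\mapsto x-thu$ is continuous. Changing $f''$ on a null set $N$ changes the integrand only on the set $\{(x,u,t): x-thu\in N\}$, which is null in $(x,u,t)$ by Tonelli and translation invariance. The pointwise identity \eqref{eq:bias-integral-representation} itself is unaffected, as already noted in the proof of Lemma~\ref{lem:integral-taylor}. I expect this bookkeeping step to be the main obstacle, although it is routine. The inequality argument itself is immediate once Minkowski/Cauchy--Schwarz is chosen.
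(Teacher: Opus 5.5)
Your proof is correct and follows essentially the same route as the paper: start from the pointwise representation \eqref{eq:bias-integral-representation}, then bound its $L^2$ norm using an integral inequality together with translation invariance of the $L^2$ norm. The paper applies Minkowski's integral inequality twice (first in $t$, then in $u$), whereas you use Cauchy--Schwarz with respect to the finite measure $\nu$ and then Tonelli; because every translate $f''(\cdot-thu)$ has norm exactly $\|f''\|_2$, both give the same constant $\tfrac{h^4}{4}\bigl(\int u^2|K(u)|\,du\bigr)^2\|f''\|_2^2$.
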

\begin{proof}

For fixed $u$, define $A_{h,u}(x):=\int_0^1(1-t)f''(x-thu)\,dt$. By Minkowski's integral inequality and translation invariance of the $L^2$ norm,
\[
\begin{aligned}
    \|A_{h,u}\|_2&\le\int_0^1(1-t)\|f''(\cdot-thu)\|_2\,dt =\frac12\|f''\|_2.
\end{aligned}
\]
Therefore, again by Minkowski's integral inequality and the integral representation \eqref{eq:bias-integral-representation},
\[
\begin{aligned}
    \|\mathbb E\widehat f_h(x)-f(x) \|_2\le h^2\int_{\mathbb R}u^2|K(u)|\,\|A_{h,u}\|_2\,du \le \frac{h^2}{2}\left(\int_{\mathbb R}u^2|K(u)|\,du\right)\|f''\|_2.
\end{aligned}
\]
Squaring both sides gives \eqref{eq:integrated-bias-bound}.
\end{proof}

\subsection{MISE upper bound}
We first recall a standard bound on the integrated variance term, which requires no smoothness on the density $f$.

\begin{lemma}[\textbf{Integrated variance}]\label{lem:integrated-variance}
Assume that $K\in L^2(\R)$ and that $f$ is a probability density. Then
\begin{equation}\label{eq:integrated-variance-bound} 
\int_{\R} \Var\bigl(\widehat{f}_h(x)\bigr)\, dx \le \frac{R(K)}{nh},
\end{equation}
where $R(K) = \int_{\R} K(u)^2\, du$. More precisely,
\begin{equation}\label{eq:integrated-variance-identity} 
\int_{\mathbb{R}}\Var\bigl(\widehat f_h(x)\bigr)\,dx=\frac{R(K)}{nh}-\frac{1}{n}\|K_h*f\|_2^2.
\end{equation}
If, in addition, $K\in L^1(\R)$ and $f\in L^2(\R)$, then
\begin{equation}\label{eq:integrated-variance-expansion} 
\int_{\R} \Var\bigl(\widehat{f}_h(x)\bigr)\, dx = \frac{R(K)}{nh} + O(n^{-1}),
\end{equation}
where the $O(n^{-1})$ term is bounded uniformly in $h$.
\end{lemma}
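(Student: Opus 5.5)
The plan is to compute the pointwise variance exactly and then integrate it over $x$ using Fubini–Tonelli. Since the observations are i.i.d., for each fixed $x$
\[
\Var\bigl(\widehat f_h(x)\bigr)=\frac1n\Var\bigl(K_h(x-X_1)\bigr)=\frac1n\Bigl\{\E K_h(x-X_1)^2-\bigl(\E K_h(x-X_1)\bigr)^2\Bigr\}.
\]
By \eqref{eq:expectation-convolution}, the second expectation is $(K_h*f)(x)$. For the first, write $\E K_h(x-X_1)^2=\int K_h(x-y)^2f(y)\,dy$. This quantity is nonnegative, so Tonelli lets us swap the order of integration:
\[
\int_{\R}\int_{\R}K_h(x-y)^2f(y)\,dy\,dx=\int_{\R}f(y)\int_{\R}K_h(x-y)^2\,dx\,dy=\|K_h\|_2^2=\frac{R(K)}{h}.
\]
The last step uses $\int f=1$ and the change of variables $\|K_h\|_2^2=h^{-2}\int K(x/h)^2\,dx=R(K)/h$.

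One integrability point needs care before subtracting. The quantity $\int (K_h*f)^2\,dx$ is dominated pointwise by $\E K_h(x-X_1)^2$, by Jensen or Cauchy–Schwarz. Hence $K_h*f\in L^2$ and $\|K_h*f\|_2^2\le R(K)/h<\infty$. Integrating the variance formula term by term then gives the identity \eqref{eq:integrated-variance-identity}. The bound \eqref{eq:integrated-variance-bound} follows at once, because the subtracted term is nonnegative.

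For the expansion \eqref{eq:integrated-variance-expansion}, it suffices to show that $\|K_h*f\|_2^2$ is bounded uniformly in $h$. Young's convolution inequality gives
\[
\|K_h*f\|_2\le\|K_h\|_1\|f\|_2=\|K\|_1\|f\|_2,
\]
since $\|K_h\|_1=\|K\|_1$ by scaling. Consequently
\[
0\le \frac1n\|K_h*f\|_2^2\le \frac{\|K\|_1^2\|f\|_2^2}{n},
\]
which is $O(n^{-1})$ uniformly in $h$.

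The only step I expect to need attention is justifying the interchange of integrals and making sure the subtraction involves no $\infty-\infty$. Because $K$ may be signed, I would apply Tonelli to the squared, hence nonnegative, integrands. I would also establish finiteness of both pieces, as above, before splitting the integral. The rest is routine scaling.
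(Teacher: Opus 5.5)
Your proposal is correct and follows the paper's proof essentially step for step. Both use the i.i.d.\ reduction to $\frac1n\Var\bigl(K_h(x-X_1)\bigr)$, Tonelli together with the scaling $\|K_h\|_2^2=R(K)/h$, the identification $\E K_h(x-X_1)=(K_h*f)(x)$, and Young's inequality $\|K_h*f\|_2\le\|K\|_1\|f\|_2$. The only differences are that you derive the identity first and read off the bound, and that you explicitly verify $(K_h*f)(x)^2\le \E K_h(x-X_1)^2$ to exclude an $\infty-\infty$ subtraction, a point the paper leaves implicit.
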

\begin{proof}
For fixed $x$, independence of $X_1,\ldots,X_n$ gives
\[
\Var\bigl(\widehat{f}_h(x)\bigr) = \frac{1}{n} \Var\bigl(K_h(x - X_1)\bigr) \le \frac{1}{n} \E\bigl[ K_h(x - X_1)^2 \bigr].
\]
Integrating over $x$ and applying Fubini's theorem yields
\[
\int_{\R} \Var\bigl(\widehat{f}_h(x)\bigr)\, dx \le \frac{1}{n} \int_{\R} \int_{\R} K_h(x-y)^2 f(y)\, dy\, dx = \frac{1}{n} \int_{\R} f(y)\, dy \int_{\R} K_h(z)^2\, dz = \frac{R(K)}{nh}.
\]
For the exact identity, note that $\E\{K_h(x-X_1)\}=(K_h*f)(x)$. Therefore,
\[
\int_{\R} \Var\bigl(\widehat{f}_h(x)\bigr)\, dx = \frac1n\int_{\R}\left[\E\{K_h(x-X_1)^2\} - \{\E K_h(x-X_1)\}^2 \right]dx = \frac{R(K)}{nh} - \frac{1}{n} \|K_h * f\|_2^2.
\]
Under the additional assumptions $K\in L^1(\R)$ and $f\in L^2(\R)$, Young's convolution inequality implies
\[
\|K_h * f\|_2 \le \|K_h\|_1\|f\|_2=\|K\|_1 \|f\|_2.
\]
Hence $\frac1n\|K_h*f\|_2^2=O(n^{-1})$
uniformly in $h$, and \eqref{eq:integrated-variance-expansion} follows.
\end{proof}

\begin{theorem}[\textbf{MISE upper bound under $C^{1,1}$}]\label{thm:mise-bound}
Assume that $K$ satisfies Assumption \ref{ass:kernel} and that $f$ satisfies Assumption \ref{ass:density}. Then, for every $h>0$,
\begin{equation}\label{eq:mise-upper-bound} 
\MISE(\widehat{f}_h) \le \frac{h^4}{4} \Bigl( \int_{\R} u^2 |K(u)|\, du \Bigr)^2 \|f''\|_2^2 + \frac{R(K)}{nh}.
\end{equation}
In particular, $\MISE(\widehat{f}_h) \to 0$ as $h\to 0$ and $nh\to\infty$.
\end{theorem}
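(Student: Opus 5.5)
The argument is the usual bias--variance decomposition, with the two pieces controlled by Theorem~\ref{thm:integrated-bias-bound} and Lemma~\ref{lem:integrated-variance}.

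For each fixed $x$ we have $\E\{\widehat f_h(x)-f(x)\}^2=\{\E\widehat f_h(x)-f(x)\}^2+\Var\bigl(\widehat f_h(x)\bigr)$. Both terms are nonnegative and measurable in $x$, so Tonelli's theorem lets us integrate over $\R$ and obtain
\[
\MISE(\widehat f_h)=\int_{\R}\bigl(\E\widehat f_h(x)-f(x)\bigr)^2\,dx+\int_{\R}\Var\bigl(\widehat f_h(x)\bigr)\,dx .
\]
\begin{itemize}
\item The first integral is at most $\frac{h^4}{4}\bigl(\int u^2|K(u)|\,du\bigr)^2\|f''\|_2^2$ by Theorem~\ref{thm:integrated-bias-bound}.
\item The second is at most $R(K)/(nh)$ by \eqref{eq:integrated-variance-bound}. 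That bound needs only $K\in L^2(\R)$, which is \ref{ass:kernel:1}, and that $f$ is a density.
\end{itemize}
Adding the two bounds gives \eqref{eq:mise-upper-bound} for every $h>0$.

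For the limit statement, note that $\int u^2|K(u)|\,du<\infty$ by \ref{ass:kernel:4}, $\|f''\|_2<\infty$ by \ref{ass:density:F2}, and $R(K)<\infty$ by \ref{ass:kernel:1}. Hence the right-hand side of \eqref{eq:mise-upper-bound} has the form $C_1h^4+C_2/(nh)$ with finite constants $C_1,C_2$. This tends to $0$ whenever $h\to0$ and $nh\to\infty$.

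There is no substantive obstacle here. The only point needing care is the measurability and Fubini/Tonelli justification for integrating the pointwise decomposition. This is handled by the nonnegativity of both terms, together with the joint measurability of $(x,y)\mapsto K_h(x-y)^2f(y)$ already used in the proof of Lemma~\ref{lem:integrated-variance}.
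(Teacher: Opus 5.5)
Your proposal is correct and follows the paper's proof: the bias--variance decomposition, with the integrated squared bias bounded by Theorem~\ref{thm:integrated-bias-bound} and the integrated variance by Lemma~\ref{lem:integrated-variance}, followed by noting that both terms vanish as $h\to0$ and $nh\to\infty$. Your Tonelli and measurability remark is a harmless extra justification that the paper leaves implicit.
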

\begin{proof}
By the usual bias--variance decomposition,
\[
\MISE(\widehat{f}_h) = \int_{\R} \bigl(\E\widehat{f}_h(x) - f(x)\bigr)^2\, dx + \int_{\R} \Var\bigl(\widehat{f}_h(x)\bigr)\, dx.
\]
The first term is bounded by Theorem \ref{thm:integrated-bias-bound}, and the second term is bounded by Lemma \ref{lem:integrated-variance}. Combining the two bounds gives \eqref{eq:mise-upper-bound}. If $h\to0$ and $nh\to\infty$, then the two terms on the right-hand side of \eqref{eq:mise-upper-bound} both converge to zero.
\end{proof}

\begin{corollary}[\textbf{MISE upper rate}]\label{cor:mise-rate}
Under the assumptions of Theorem \ref{thm:mise-bound}, any bandwidth sequence satisfying $h_n\asymp n^{-1/5}$ yields
\[
\MISE(\widehat{f}_{h_n}) = O(n^{-4/5}).
\]
\end{corollary}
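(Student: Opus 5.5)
The corollary is a direct substitution into the explicit, nonasymptotic bound of Theorem~\ref{thm:mise-bound}. Write
\[
C_1:=\frac14\Bigl(\int_{\R}u^2|K(u)|\,du\Bigr)^2\|f''\|_2^2, \qquad C_2:=R(K).
\]
Both constants are finite. For $C_1$, use \ref{ass:kernel:4} and \ref{ass:density:F2}; for $C_2$, use \ref{ass:kernel:1}. Neither depends on $n$ or $h$. Theorem~\ref{thm:mise-bound} then gives
\[
\MISE(\widehat f_h)\le C_1h^4+\frac{C_2}{nh}\qquad\text{for every } h>0 \text{ and every } n.
\]

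Next I unpack the hypothesis $h_n\asymp n^{-1/5}$. It means there exist constants $0<c\le C<\infty$ and an index $n_0$ such that $c\,n^{-1/5}\le h_n\le C\,n^{-1/5}$ for all $n\ge n_0$. The upper bound on $h_n$ controls the bias term: $C_1h_n^4\le C_1C^4n^{-4/5}$. The lower bound controls the variance term: $nh_n\ge c\,n^{4/5}$, so $C_2/(nh_n)\le (C_2/c)\,n^{-4/5}$.

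Adding the two estimates gives $\MISE(\widehat f_{h_n})\le (C_1C^4+C_2/c)\,n^{-4/5}$ for all $n\ge n_0$, which is exactly $O(n^{-4/5})$. There is no genuine obstacle here. The only point to keep track of is that both directions of $\asymp$ are used, one for each term. Note also that the bound of Theorem~\ref{thm:mise-bound} holds for every $h>0$, not only asymptotically, so no uniformity issue arises when substituting the random-free sequence $h_n$.
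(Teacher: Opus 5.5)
Your proposal is correct and follows the paper's proof: substitute $h_n\asymp n^{-1/5}$ into the nonasymptotic bound of Theorem~\ref{thm:mise-bound}, using the upper bound on $h_n$ for the $h^4$ bias term and the lower bound for the $(nh)^{-1}$ variance term. Your version only spells out the finite constants and the two directions of $\asymp$, which the paper leaves implicit.
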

\begin{proof}
Substituting $h_n\asymp n^{-1/5}$ into \eqref{eq:mise-upper-bound} gives $h_n^4=O(n^{-4/5})$ and $(nh_n)^{-1}=O(n^{-4/5})$.
Thus both terms in the upper bound are of order $n^{-4/5}$.
\end{proof}

\section{AMISE Expansion with Weak Second Derivatives}\label{sec:amise}

The upper bounds in Section~\ref{sec:bias-mise} are sufficient to prove consistency, but they do not identify the sharp leading constant in the integrated squared bias. In particular, the bound in Theorem~\ref{thm:integrated-bias-bound} involves $\int u^2|K(u)|\,du$, because it is obtained by taking absolute values. We now show that the usual AMISE expansion remains valid under the weaker integrated condition that the second derivative exists in the weak $L^2$ sense. Thus the sharp AMISE constant is an $L^2$ weak-curvature object; the $C^{1,1}$ assumption is mainly needed for the pointwise bias bounds and for the local nonsmooth-curvature interpretation developed later.

\begin{assumption}[\textbf{Weak AMISE regularity}]\label{ass:weak-amise}
The density $f$ satisfies
\begin{enumerate}[label=(A\arabic*)]
    \item\label{ass:weak-amise:1} $f\in L^2(\mathbb R)$;
    \item\label{ass:weak-amise:2} $f$ has a weak second derivative $f''\in L^2(\mathbb R)$.
\end{enumerate}
Equivalently, $f\in H^2(\mathbb R)$ in the usual Sobolev sense. In particular, $f'$ exists weakly and belongs to $L^2(\mathbb R)$.
\end{assumption}

\begin{lemma}[\textbf{Weak integral Taylor identity in $L^2$}]\label{lem:weak-integral-taylor-L2}
Assume that $f$ satisfies Assumption~\ref{ass:weak-amise}. Then, for every fixed $u\in\mathbb R$ and $h>0$,
\begin{equation}\label{eq:weak-taylor-L2}
 f(\cdot-hu)-f(\cdot)+hu f'(\cdot)
 = h^2u^2\int_0^1(1-t)f''(\cdot-thu)\,dt
\end{equation}
in $L^2(\mathbb R)$.
\end{lemma}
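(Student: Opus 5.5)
Since both sides of \eqref{eq:weak-taylor-L2} are elements of $L^2(\mathbb R)$, I will prove the identity by density: first for smooth compactly supported functions, then pass to the limit in $H^2(\mathbb R)$. Fix $u\in\mathbb R$ and $h>0$ and set $v=-hu$. Define the linear maps
\[
T_1 g := g(\cdot+v)-g(\cdot)-v g'(\cdot),\qquad
T_2 g := v^2\int_0^1(1-t)\,g''(\cdot+tv)\,dt .
\]
The right-hand side of \eqref{eq:weak-taylor-L2} is $T_2 f$, interpreted as an $L^2$-valued Bochner integral or, equivalently, as the pointwise integral, which is defined for almost every $x$ by Fubini. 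The left-hand side is $T_1 f$. The goal is to show $T_1 f=T_2 f$ in $L^2$.

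\textbf{Step 1 (bounds).} Using Minkowski's integral inequality and translation invariance of the $L^2$ norm, exactly as in the proof of Theorem~\ref{thm:integrated-bias-bound}, I get
\[
\|T_2 g\|_2\le \tfrac{v^2}{2}\|g''\|_2,\qquad
\|T_1 g\|_2\le 2\|g\|_2+|v|\,\|g'\|_2 .
\]
Both maps are therefore bounded from $H^2(\mathbb R)$ into $L^2(\mathbb R)$. The first derivative $f'$ lies in $L^2$ by Assumption~\ref{ass:weak-amise}, via the standard interpolation $\|g'\|_2^2\le\|g\|_2\|g''\|_2$. This can be seen through Fourier transforms, $\int\xi^2|\hat g|^2\le(\int|\hat g|^2)^{1/2}(\int\xi^4|\hat g|^2)^{1/2}$, which also confirms the equivalence with $H^2$ claimed in the assumption.

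\textbf{Step 2 (smooth case).} For $g\in C_c^\infty(\mathbb R)$, the classical integral Taylor formula, which is Lemma~\ref{lem:integral-taylor} applied to $g\in C^{1,1}$, gives $T_1g(x)=T_2g(x)$ for every $x$. Hence $T_1g=T_2g$ in $L^2$.

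\textbf{Step 3 (approximation).} I will use that $C_c^\infty(\mathbb R)$ is dense in $H^2(\mathbb R)$. Concretely, mollify, $g_\varepsilon=\rho_\varepsilon*f$, so that weak derivatives commute with convolution and $\rho_\varepsilon*f^{(k)}\to f^{(k)}$ in $L^2$ by $L^2$ translation continuity. Then multiply by cutoffs $\chi(\cdot/R)$; the product-rule terms carry factors $R^{-1}$ and $R^{-2}$ and vanish. Choose $g_k\to f$ in $H^2$. By Step 1, $T_1g_k\to T_1f$ and $T_2g_k\to T_2f$ in $L^2$, and Step 2 gives equality in the limit.

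\textbf{Main obstacle.} The main obstacle is the bookkeeping in Step 3, together with making sure that the meaning of $T_2f$ as a pointwise a.e.\ integral agrees with the $L^2$ limit. For the identification, I will note that $\int_0^1(1-t)|f''(x+tv)|\,dt$ is finite a.e. This follows by Tonelli on bounded $x$-intervals, since $f''\in L^2\subset L^1_{\mathrm{loc}}$. The estimate in Step 1 then shows that the pointwise definition is the $L^2$ limit. If one prefers to avoid the density argument, an alternative is to take Fourier transforms of both sides and verify the scalar identity $e^{i\xi v}-1-i\xi v=-\xi^2v^2\int_0^1(1-t)e^{i\xi tv}\,dt$ directly. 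Combined with Plancherel, this proves \eqref{eq:weak-taylor-L2} in one stroke.
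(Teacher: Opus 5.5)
Your proposal is correct and follows essentially the same route as the paper. Both arguments mollify $f$, apply the classical integral Taylor formula to the smooth approximant, and pass to the limit on both sides via Minkowski's inequality and $L^2$ translation invariance; your bounded operators $T_1,T_2\colon H^2(\mathbb R)\to L^2(\mathbb R)$ simply package this continuity. Your cutoff step is harmless but unnecessary, since $f*\rho_\varepsilon$ is already $C^\infty$ and the Taylor formula holds pointwise for it without compact support, exactly as the paper uses it.
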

\begin{proof}
Let $\rho_\varepsilon$ be a standard mollifier and set $f_\varepsilon=f*\rho_\varepsilon$. Since $f\in H^2(\mathbb R)$, we have
\[
 f_\varepsilon\to f,\qquad f_\varepsilon'\to f',\qquad f_\varepsilon''\to f''
 \quad\text{in }L^2(\mathbb R).
\]
For the smooth function $f_\varepsilon$, the ordinary integral Taylor formula gives
\[
 f_\varepsilon(\cdot-hu)-f_\varepsilon(\cdot)+hu f_\varepsilon'(\cdot)
 =h^2u^2\int_0^1(1-t)f_\varepsilon''(\cdot-thu)\,dt .
\]
The left-hand side converges in $L^2(\mathbb R)$ by the preceding convergence and translation invariance of the $L^2$ norm. The right-hand side also converges in $L^2(\mathbb R)$, because Minkowski's inequality and translation invariance give
\[
\left\|\int_0^1(1-t)\{f_\varepsilon''(\cdot-thu)-f''(\cdot-thu)\}\,dt\right\|_2
\le \frac12\|f_\varepsilon''-f''\|_2\to0.
\]
Passing to the limit proves~\eqref{eq:weak-taylor-L2}.
\end{proof}

\begin{theorem}[\textbf{AMISE expansion under weak $L^2$ curvature}]\label{thm:amise-expansion}
Assume that $K$ satisfies Assumption \ref{ass:kernel} and that $f$ satisfies Assumption~\ref{ass:weak-amise}. If $h=h_n \to 0$ and $nh_n \to \infty$, then
\begin{equation}\label{eq:mise-expansion} 
\MISE(\widehat{f}_h) = \frac{h^4}{4} \mu_2(K)^2 R(f'') + \frac{R(K)}{nh} + o(h^4) + O(n^{-1}),
\end{equation}
where $\mu_2(K) = \int_{\R} u^2 K(u)\, du$ and $R(f'') = \|f''\|_2^2$. In particular,
\begin{equation}\label{eq:amise-classical-weak} 
\MISE(\widehat{f}_h) = \frac{h^4}{4} \mu_2(K)^2 R(f'') + \frac{R(K)}{nh} + o\bigl(h^4 + (nh)^{-1}\bigr).
\end{equation}
\end{theorem}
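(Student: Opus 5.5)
We split the MISE by the bias--variance decomposition used in Theorem~\ref{thm:mise-bound} and treat the two terms separately. For the variance, Lemma~\ref{lem:integrated-variance} applies directly. Assumption~\ref{ass:kernel} gives $K\in L^1\cap L^2$, and Assumption~\ref{ass:weak-amise} gives $f\in L^2$. Hence the integrated variance equals $R(K)/(nh)+O(n^{-1})$, with the remainder uniform in $h$. The whole task is therefore to show
\[
\int_{\mathbb R}\bigl(\mathbb E\widehat f_h(x)-f(x)\bigr)^2dx=\frac{h^4}{4}\mu_2(K)^2R(f'')+o(h^4).
\]

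For the bias, we first write it as an $L^2$-valued (Bochner) integral:
\[
\mathbb E\widehat f_h-f=\int_{\mathbb R}K(u)\{f(\cdot-hu)-f\}\,du .
\]
We insert Lemma~\ref{lem:weak-integral-taylor-L2} pointwise in $u$. The linear term $-hu f'$ integrates to zero by (K3), which is legitimate because $\int|u||K|<\infty$ follows from (K1) and (K4), and $f'\in L^2$. This yields
\[
\mathbb E\widehat f_h-f=h^2\int_{\mathbb R}u^2K(u)A_{h,u}\,du,\qquad A_{h,u}:=\int_0^1(1-t)f''(\cdot-thu)\,dt .
\]
We then subtract the target term. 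Since $\int_0^1(1-t)\,dt=\tfrac12$,
\[
h^{-2}(\mathbb E\widehat f_h-f)-\tfrac12\mu_2(K)f''=\int_{\mathbb R}u^2K(u)\int_0^1(1-t)\{f''(\cdot-thu)-f''\}\,dt\,du .
\]
Minkowski's integral inequality bounds the $L^2$ norm of the right-hand side by
\[
\int_{\mathbb R} u^2|K(u)|\int_0^1(1-t)\,\omega(thu)\,dt\,du,\qquad \omega(s):=\|f''(\cdot-s)-f''\|_2 .
\]

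The key step, and the only real obstacle, is showing that this quantity tends to $0$ as $h\to0$. For each fixed $(u,t)$, $\omega(thu)\to0$ by continuity of translation in $L^2$. The integrand is also dominated by $2\|f''\|_2u^2|K(u)|$, which is integrable on $\mathbb R\times[0,1]$ by (K4). Dominated convergence therefore gives that the remainder $r_h$ satisfies $\|r_h\|_2=o(1)$.

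It remains to expand the square. Write $\mathbb E\widehat f_h-f=h^2(\tfrac12\mu_2f''+r_h)$. Then
\[
\|\mathbb E\widehat f_h-f\|_2^2=h^4\Bigl(\tfrac14\mu_2^2R(f'')+\mu_2\langle f'',r_h\rangle+\|r_h\|_2^2\Bigr).
\]
By Cauchy--Schwarz, the cross term is at most $|\mu_2|\,\|f''\|_2\|r_h\|_2=o(1)$, and $\|r_h\|_2^2=o(1)$ as well. This gives the $o(h^4)$ remainder and hence \eqref{eq:mise-expansion}. Finally, \eqref{eq:amise-classical-weak} follows because $n^{-1}=h\cdot(nh)^{-1}=o\bigl((nh)^{-1}\bigr)$ when $h\to0$. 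A minor technical point is that the Bochner integrals are well defined, which requires strong measurability of $u\mapsto f(\cdot-hu)$; this follows from continuity of translation in $L^2$.
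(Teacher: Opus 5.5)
Your proposal is correct and follows essentially the same route as the paper. Both arguments split MISE into bias and variance with Lemma~\ref{lem:integrated-variance}, and both integrate the $L^2$ weak Taylor identity of Lemma~\ref{lem:weak-integral-taylor-L2} against $K(u)\,du$ so that the linear term vanishes. Both then use Minkowski's inequality and dominated convergence via $L^2$ translation continuity to get $h^{-2}(\E\widehat f_h-f)\to\tfrac12\mu_2(K)f''$ in $L^2$, and conclude with $n^{-1}=h(nh)^{-1}=o((nh)^{-1})$.

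Your explicit Cauchy--Schwarz expansion of the square and your remark on Bochner measurability only spell out steps the paper leaves implicit; the paper instead appeals directly to continuity of the norm under $L^2$ convergence.
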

\begin{proof}
Let $B_h:=\E\widehat f_h-f=K_h*f-f$. By Lemma~\ref{lem:weak-integral-taylor-L2}, for each fixed $u$,
\[
 f(\cdot-hu)-f(\cdot)
 =-hu f'(\cdot)+h^2u^2\int_0^1(1-t)f''(\cdot-thu)\,dt
\]
in $L^2(\mathbb R)$. Integrating this identity with respect to $K(u)\,du$ and using $\int uK(u)\,du=0$ gives
\[
A_h := \frac{B_h}{h^2} = \int_{\R} u^2 K(u) \left( \int_0^1 (1-t) f''(\cdot - t h u)\, dt \right) du
\]
in $L^2(\mathbb R)$. We prove that
\begin{equation}\label{eq:Ah-L2-convergence} 
A_h \to \frac{\mu_2(K)}{2} f'' \qquad \text{in } L^2(\R) \quad \text{as } h \to 0.
\end{equation}
Indeed,
\[
A_h - \frac{\mu_2(K)}{2} f'' = \int_{\R} u^2 K(u) \left[ \int_0^1 (1-t) \{f''(\cdot - t h u) - f''\}\,dt \right] du.
\]
Taking the $L^2$ norm and applying Minkowski's integral inequality gives
\[
\Bigl\| A_h - \frac{\mu_2(K)}{2} f'' \Bigr\|_2 \le \int_{\R} u^2 |K(u)| \int_0^1 (1-t) \bigl\| f''(\cdot - t h u) - f'' \bigr\|_2 \, dt \, du.
\]
For each fixed $u\in\mathbb{R}$ and $t\in[0,1]$, the $L^2$-continuity of translations gives
\[
\bigl\| f''(\cdot - t h u) - f'' \bigr\|_2 \to 0 \quad \text{as } h \to 0.
\]
Moreover,
\[
u^2|K(u)|(1-t)\|f''(\cdot-thu)-f''\|_2 \le 2u^2|K(u)|(1-t)\|f''\|_2,
\]
and the right-hand side is integrable over $\R\times[0,1]$ by Assumption~\ref{ass:kernel}. The dominated convergence theorem therefore proves \eqref{eq:Ah-L2-convergence}.
It follows that
\[
\|B_h\|_2^2 = h^4 \|A_h\|_2^2 = h^4 \Bigl( \frac{\mu_2(K)}{2} \Bigr)^2 \|f''\|_2^2 + o(h^4) = \frac{h^4}{4} \mu_2(K)^2 R(f'') + o(h^4).
\]
Combining this integrated squared bias expansion with the integrated variance identity from Lemma~\ref{lem:integrated-variance} gives
\[
\MISE(\widehat f_h)=\frac{h^4}{4}\mu_2(K)^2R(f'')+\frac{R(K)}{nh}+o(h^4)+O(n^{-1}).
\]
Finally, since $h\to0$, we have $n^{-1}=h(nh)^{-1}=o((nh)^{-1})$.
Therefore the $O(n^{-1})$ term is negligible relative to $(nh)^{-1}$, and \eqref{eq:amise-classical-weak} follows.
\end{proof}

The expansion in Theorem~\ref{thm:amise-expansion} motivates the usual asymptotic mean integrated squared error approximation
\begin{equation}\label{eq:amise-definition} 
\AMISE(\widehat f_h) :=\frac{h^4}{4}\mu_2(K)^2R(f'')+\frac{R(K)}{nh},
\end{equation}
where $R(f'')=\|f''\|_2^2$ and $f''$ is interpreted as the weak second derivative.

\begin{corollary}[\textbf{AMISE-optimal bandwidth}]\label{cor:amise-bandwidth}
Assume that $\mu_2(K)^2 R(f'') > 0$. Then the bandwidth that minimizes the AMISE \eqref{eq:amise-definition} is given by
\begin{equation}\label{eq:h-amis} 
h_{\AMISE} = \left( \frac{R(K)}{\mu_2(K)^2 R(f'') \, n} \right)^{1/5}.
\end{equation}
In particular, $ h_{\AMISE} \asymp n^{-1/5} $ whenever $ R(K) $ and $ R(f'') $ are fixed positive constants.
\end{corollary}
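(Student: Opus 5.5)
The plan is to treat the AMISE in \eqref{eq:amise-definition} as a deterministic function of $h\in(0,\infty)$,
\[
\phi(h):=a h^4+\frac{b}{h},\qquad a:=\tfrac14\mu_2(K)^2R(f'')>0,\quad b:=\frac{R(K)}{n}>0,
\]
and minimize it by elementary calculus. Positivity of $a$ is exactly the hypothesis $\mu_2(K)^2R(f'')>0$. Positivity of $b$ follows from $K\in L^2(\R)$ and $\int K=1$, which rule out $K=0$ a.e., so $R(K)>0$.

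First, I will note that $\phi$ is continuous on $(0,\infty)$ with $\phi(h)\to\infty$ both as $h\downarrow0$ (from the $b/h$ term) and as $h\to\infty$ (from the $ah^4$ term), so a global minimizer exists in the interior. Next, I will compute
\[
\phi'(h)=4ah^3-\frac{b}{h^2},
\]
which vanishes if and only if $h^5=b/(4a)$. Substituting the definitions of $a$ and $b$ gives $b/(4a)=R(K)/\{\mu_2(K)^2R(f'')\,n\}$, which is \eqref{eq:h-amis}. To confirm that this critical point is the unique global minimizer, I will observe that $\phi'(h)<0$ for $h^5<b/(4a)$ and $\phi'(h)>0$ for $h^5>b/(4a)$, because $h^2\phi'(h)=4ah^5-b$ is strictly increasing in $h$. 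Alternatively, $\phi''(h)=12ah^2+2b/h^3>0$ shows strict convexity. Either argument identifies the stationary point as the unique minimizer.

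For the rate statement, I will fix $R(K)$ and $R(f'')$ as positive constants, independent of $n$, with $\mu_2(K)\neq0$ guaranteed by the hypothesis. Then $h_{\AMISE}=C\,n^{-1/5}$ with $C=\{R(K)/(\mu_2(K)^2R(f''))\}^{1/5}\in(0,\infty)$, which gives $h_{\AMISE}\asymp n^{-1/5}$. I do not expect any real obstacle. The only point requiring care is recording that $R(K)>0$, and noting that $R(f'')$ is the weak-curvature functional from Theorem~\ref{thm:amise-expansion}, which plays no role here beyond being a finite positive constant.
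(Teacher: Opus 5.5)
Your proof is correct and follows the same route as the paper: write the AMISE as a function of $h$, differentiate, set the derivative to zero, and solve for $h^5$. You also verify that the stationary point is the unique global minimizer (via the sign of $h^2\phi'(h)$, or strict convexity) and that $R(K)>0$; the paper leaves both points implicit.
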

\begin{proof}
Differentiating \eqref{eq:amise-definition} with respect to $h$ gives
\[
\frac{d}{dh} \AMISE(\widehat{f}_h) = h^3 \mu_2(K)^2 R(f'') - \frac{R(K)}{n h^2}.
\]
Setting the derivative equal to zero yields
\[
h^3 \mu_2(K)^2 R(f'') = \frac{R(K)}{n h^2},
\]
which simplifies to
\[
h^5 = \frac{R(K)}{\mu_2(K)^2 R(f'') \, n}.
\]
Solving for $ h $ gives the claimed expression \eqref{eq:h-amis}.
\end{proof}

\begin{remark}[\textbf{Degenerate cases}]\label{rem:degenerate-curvature}
The non-degeneracy condition $\mu_2(K)^2 R(f'') > 0$ excludes two important cases in which the standard second-order AMISE formula is inappropriate:

\begin{itemize}
    \item If $\mu_2(K) = 0$, then $K$ is a higher-order kernel and the leading $h^4$ bias term disappears. In this case, a different expansion and bandwidth formula are needed.

    \item If $R(f'') = 0$, then $f'' = 0$ almost everywhere. For an integrable density on $\R$, this implies that $f$ is affine almost everywhere, which is incompatible with $f$ being a non-degenerate probability density (since $\int_{\R} f = 1$ and $f \geq 0$).
\end{itemize}

In both cases, the classical second-order bandwidth selector should not be used without an appropriate modification of the asymptotic expansion.
\end{remark}

\section{Generalized-Curvature Plug-In Bandwidth Selection}\label{sec:gcpi}

The AMISE bandwidth in Corollary \ref{cor:amise-bandwidth} depends on the unknown weak-curvature functional $R(f'')$. Classical plug-in bandwidth selectors replace this unknown quantity by a pilot estimate; see \cite{Chiu1996,HallSheatherJonesMarron1991,Heidenreich2013,JonesMarronSheather1996,ParkMarron1990,Sheather2004,SheatherJones1991,WandJones1995}. Density-derivative estimation and derivative-based bandwidth selectors are also treated in \cite{ChaconDuong2013,ChaconDuongWand2011,FanMarron1992}, and in modern computational work such as \cite{Guidoum2020}. Integrated squared derivative estimation, including estimation of functionals of the form $R(f^{(r)})$, was studied in \cite{BickelRitov1988,HallMarron1987,JonesSheather1991}.

In the present setting, the same plug-in principle is used, but the target curvature functional is interpreted weakly rather than through a pointwise continuous classical second derivative. This section formulates a proof-of-concept weak-curvature plug-in rule. The aim is to show that the classical first-order AMISE plug-in logic remains meaningful when the curvature term is $R(f'')$ with $f''$ understood as a weak second derivative.

Let $L$ be a pilot kernel satisfying $L\in L^1(\mathbb R)$, $\int_{\mathbb R}L(u)\,du=1$, and suppose that its weak second derivative satisfies $L''\in L^1(\mathbb R)\cap L^2(\mathbb R)$. Let $b>0$ be a pilot bandwidth and define $L_b(x):=b^{-1}L(x/b)$. Then $L_b''(x)=b^{-3}L''(x/b)$ in the weak sense. The pilot estimator of the second derivative of $f$ is
\begin{equation}\label{eq:pilot-second-derivative} 
\widehat{f}_b''(x) := \frac{1}{n} \sum_{i=1}^n L_b''(x - X_i) = \frac{1}{n b^3} \sum_{i=1}^n L''\Bigl( \frac{x - X_i}{b} \Bigr).
\end{equation}
Under the standing assumption $f''\in L^2(\mathbb{R})$, its expectation satisfies
\[
\E[\widehat{f}_b''(x)] = L_b'' * f = L_b * f'' \qquad \text{in } L^2(\R),
\]
where the last equality holds in the sense of distributions (and hence in $ L^2 $) under our assumptions.

A natural estimator of $ R(f'') $ is $ \int_{\R} \{\widehat{f}_b''(x)\}^2 \, dx $. However, this quantity contains diagonal self-interaction terms. To isolate the off-diagonal part, define the autocorrelation kernel
\begin{equation}\label{eq:pilot-autocorrelation} 
G_L(z):=\int_{\mathbb{R}} L''(y)L''(y+z)\,dy.
\end{equation}
Equivalently, if $\widetilde{L''}(y)=L''(-y)$, then $G_L=\widetilde{L''}*L''$. If $L$ is even, then $L''$ is even and this reduces to $G_L=L''*L''$. Since $L''\in L^1(\mathbb R)\cap L^2(\mathbb R)$, Young's convolution inequality gives $G_L\in L^1(\mathbb R)\cap L^2(\mathbb R)$.

We define the off-diagonal U-statistic-type curvature estimator by
\begin{equation}\label{eq:U-curvature-estimator}
\widehat{R}_b^U := \frac{1}{n(n-1) b^5} \sum_{1 \le i \ne j \le n} G_L\Bigl( \frac{X_i - X_j}{b} \Bigr).
\end{equation}

The following lemma makes the relationship between the two estimators explicit.
\begin{lemma}[\textbf{Diagonal decomposition}]\label{lem:diagonal-decomposition}
Assume $ L'' \in L^2(\R) $. Then
\begin{equation}
\label{eq:diagonal-decomposition} 
\int_{\R} \{\widehat{f}_b''(x)\}^2 \, dx = \frac{n-1}{n} \widehat{R}_b^U + \frac{1}{n b^5} R(L''),
\end{equation}
where $ R(L'') = \int_{\R} [L''(u)]^2 \, du $.
\end{lemma}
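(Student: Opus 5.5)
The plan is to expand the square of the finite sum defining $\widehat f_b''$ and integrate term by term. By \eqref{eq:pilot-second-derivative},
\[
\{\widehat f_b''(x)\}^2=\frac{1}{n^2}\sum_{i=1}^n\sum_{j=1}^n L_b''(x-X_i)L_b''(x-X_j).
\]
Since $L''\in L^2(\mathbb R)$, each $L_b''(\cdot-X_i)$ lies in $L^2$. Cauchy--Schwarz then shows that every product is integrable, so integrating the finite sum term by term is legitimate; no Fubini argument beyond linearity is needed. This is done pathwise, that is, for fixed realizations of $X_1,\ldots,X_n$.

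Next I will compute a generic cross term. Using $L_b''(x)=b^{-3}L''(x/b)$, I will substitute $y=(x-X_i)/b$, so that $dx=b\,dy$ and $(x-X_j)/b=y+(X_i-X_j)/b$. This gives
\[
\int_{\mathbb R}L_b''(x-X_i)L_b''(x-X_j)\,dx=\frac{1}{b^5}\int_{\mathbb R}L''(y)L''\Bigl(y+\tfrac{X_i-X_j}{b}\Bigr)dy=\frac{1}{b^5}G_L\Bigl(\frac{X_i-X_j}{b}\Bigr),
\]
which matches the definition \eqref{eq:pilot-autocorrelation}. On the diagonal $i=j$, the argument is zero and $G_L(0)=\int (L'')^2=R(L'')$. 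The $n$ diagonal terms therefore contribute $n\cdot n^{-2}b^{-5}R(L'')=R(L'')/(nb^5)$.

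The off-diagonal terms contribute
\[
\frac{1}{n^2b^5}\sum_{i\ne j}G_L\bigl((X_i-X_j)/b\bigr).
\]
By \eqref{eq:U-curvature-estimator}, this equals $\frac{n(n-1)}{n^2}\widehat R_b^U=\frac{n-1}{n}\widehat R_b^U$. Adding the two pieces gives \eqref{eq:diagonal-decomposition}.

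There is no real obstacle here. The only points to check are the sign convention in $G_L$, which does not matter because the double sum is symmetric in $(i,j)$, and that $G_L$ is defined pointwise everywhere. The latter holds because $G_L(z)$ is an $L^2$ inner product of $L''$ with a translate of itself, so it is finite for every $z$, including $z=0$.
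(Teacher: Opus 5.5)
Your proposal is correct and follows the paper's proof essentially line by line. You expand the square into the double sum, use the substitution $y=(x-X_i)/b$ to turn each off-diagonal term into $b^{-5}G_L((X_i-X_j)/b)$ and each diagonal term into $b^{-5}R(L'')$, and then regroup using the normalization of $\widehat R_b^U$. Your additional remarks, namely the Cauchy--Schwarz integrability of the products and the pointwise finiteness of $G_L$ with $G_L(0)=R(L'')$, are valid details that the paper leaves implicit.
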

\begin{proof}
By definition,
\[
\int_{\R} \{\widehat{f}_b''(x)\}^2 \, dx = \frac{1}{n^2} \sum_{i=1}^n \sum_{j=1}^n \int_{\R} L_b''(x - X_i) L_b''(x - X_j) \, dx.
\]
For $i\ne j$, using $L_b''(x)=b^{-3}L''(x/b)$ and the change of variables $ y = (x - X_i)/b $ gives
\[
\int L_b''(x - X_i) L_b''(x - X_j) \, dx = b^{-5} \int_{\R} L''(y) L''\Bigl(y + \frac{X_i - X_j}{b}\Bigr) \, dy = b^{-5} G_L\Bigl( \frac{X_i - X_j}{b} \Bigr).
\]
For the diagonal terms ($ i = j $), the same calculation gives
\[
\int_{\R} [L_b''(x - X_i)]^2 \, dx = b^{-5} R(L'').
\]
Therefore,
\[
\begin{aligned}
    \int_{\mathbb{R}}\{\widehat f_b''(x)\}^2\,dx  &= \frac{1}{n^2b^5} \sum_{1\le i\ne j\le n} G_L\left(\frac{X_i-X_j}{b}\right)
    + \frac{1}{n^2} \sum_{i=1}^n b^{-5}R(L'')       \\
    &= \frac{n-1}{n}\widehat R_b^U + \frac{1}{nb^5}R(L'').
\end{aligned}
\]
This proves~\eqref{eq:diagonal-decomposition}.
\end{proof}

Because finite-sample values of \eqref{eq:U-curvature-estimator} may occasionally be nonpositive, especially for small samples or hard-to-estimate densities, we use the truncated version
\begin{equation}\label{eq:truncated-curvature-v5} 
\widehat R_{b,\tau}:=\max\{\widehat R_b^{\,U},\tau_n\},
\end{equation}
where $\tau_n\downarrow0$ is a deterministic numerical safeguard. The generalized-curvature plug-in bandwidth is then defined by
\begin{equation}\label{eq:gcpi-bandwidth-v5} 
\widehat h_{\operatorname{GCPI}} :=\left\{\frac{R(K)}{\mu_2(K)^2\,\widehat R_{b,\tau}\, n}\right\}^{1/5}.
\end{equation}
The factor $n$ in the denominator of \eqref{eq:gcpi-bandwidth-v5} is essential: it gives the plug-in bandwidth the same $n^{-1/5}$ scale as the AMISE oracle bandwidth. We call \eqref{eq:gcpi-bandwidth-v5} the generalized-curvature plug-in (GCPI) bandwidth. The term ``generalized'' refers to the fact that the target curvature is $R(f'')$ with $f''$ interpreted as a weak second derivative.

\medskip
\noindent\fbox{%
\begin{minipage}{0.96\linewidth}
\textbf{Algorithm 1: Generalized-curvature plug-in bandwidth selector.}

\smallskip
\noindent\textbf{Input:} data $X_1,\ldots,X_n$; main kernel $K$; pilot kernel $L$; pilot bandwidth $b=b_n$; truncation level $\tau_n>0$.

\begin{enumerate}[leftmargin=1.5em]
    \item Compute the pilot second-derivative estimator $\widehat f_b''$ in \eqref{eq:pilot-second-derivative}.
    \item Compute the off-diagonal curvature estimate $\widehat R_b^{\,U}$ in \eqref{eq:U-curvature-estimator}, equivalently by the diagonal correction in Lemma~\ref{lem:diagonal-decomposition} when numerical quadrature is used.
    \item Truncate the curvature estimate by $\widehat R_{b,\tau}=\max\{\widehat R_b^{\,U},\tau_n\}$.
    \item Return
    \[
    \widehat h_{\operatorname{GCPI}}
    =\left\{\frac{R(K)}{\mu_2(K)^2\widehat R_{b,\tau}n}\right\}^{1/5}.
    \]
\end{enumerate}
\end{minipage}}

\medskip
For a practical default in the numerical examples, we use a Gaussian pilot kernel and a scale-adjusted pilot bandwidth of the form
\[
 b_n=s_{\rm rob} n^{-1/9},\qquad s_{\rm rob}=\min\{s,\operatorname{IQR}/1.34\},
\]
where $s$ is the sample standard deviation and $\operatorname{IQR}$ is the sample interquartile range. This rule is not claimed to be optimal; it is a stable implementation choice that keeps the pilot bandwidth explicit and reproducible. The asymptotic theory below only requires bandwidth conditions sufficient for ratio consistency of the weak-curvature estimator.

The next theorem isolates the essential statistical requirement. It is stated in a modular form: any pilot curvature estimator satisfying ratio consistency can be inserted into the AMISE bandwidth formula. The specific U-statistic estimator \eqref{eq:U-curvature-estimator} is treated in Proposition~\ref{prop:U-consistency-detailed}.
\begin{theorem}[\textbf{Oracle equivalence of the GCPI bandwidth}]\label{thm:gcpi-oracle}
Assume the conditions of Theorem \ref{thm:amise-expansion}, and suppose $\mu_2(K)^2R(f'')>0$. Let $\widehat R_n$ be a nonnegative estimator of $R(f'')$ such that
\begin{equation}
\label{eq:ratio-consistency-curvature-v5} 
\frac{\widehat R_n}{R(f'')}\xrightarrow{p}1.
\end{equation}
On the event $\{\widehat R_n>0\}$, define
\begin{equation}
\label{eq:generic-plugin-bandwidth-v5} 
\widehat h_n=\left\{\frac{R(K)}{\mu_2(K)^2\,\widehat R_n\, n}\right\}^{1/5}.
\end{equation}
On the event $\{\widehat R_n=0\}$, define $\widehat h_n=h_{\AMISE}$. Then
\begin{equation}
\label{eq:plugin-ratio-v5} 
\frac{\widehat h_n}{h_{\AMISE}}\xrightarrow{p}1.
\end{equation}
Moreover,
\begin{equation}
\label{eq:plugin-risk-ratio-v5} 
\frac{\AMISE(\widehat f_{\widehat h_n})}{\AMISE(\widehat f_{h_{\AMISE}})}\xrightarrow{p}1.
\end{equation}
The same conclusion holds for the truncated bandwidth \eqref{eq:gcpi-bandwidth-v5} whenever $\widehat R_b^{\,U}/R(f'')\to_p1$ and $\tau_n/R(f'')\to0$.
\end{theorem}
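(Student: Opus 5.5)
The whole argument reduces to the continuous mapping theorem applied to the explicit scaling formula. On the event $\{\widehat R_n>0\}$, the definitions \eqref{eq:generic-plugin-bandwidth-v5} and \eqref{eq:h-amis} give
\[
\frac{\widehat h_n}{h_{\AMISE}}=\left(\frac{R(f'')}{\widehat R_n}\right)^{1/5},
\]
because the factors $R(K)$, $\mu_2(K)^2$ and $n$ cancel exactly. Since $R(f'')>0$ is a fixed constant, the ratio consistency \eqref{eq:ratio-consistency-curvature-v5} gives $\Pr(\widehat R_n=0)\le \Pr(|\widehat R_n/R(f'')-1|\ge 1/2)\to0$. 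Hence the exceptional event, where the ratio is $1$ by definition, has vanishing probability. The map $x\mapsto x^{-1/5}$ is continuous at $x=1$, so the continuous mapping theorem yields \eqref{eq:plugin-ratio-v5}. I would phrase this with an explicit $\varepsilon$-argument: for $|\widehat R_n/R(f'')-1|<\delta$ we have $|\widehat h_n/h_{\AMISE}-1|<\varepsilon$, with $\delta$ chosen by continuity.

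For the risk ratio \eqref{eq:plugin-risk-ratio-v5}, I interpret $\AMISE(\widehat f_{\widehat h_n})$ as the deterministic function \eqref{eq:amise-definition} evaluated at the random bandwidth $h=\widehat h_n$, not as a conditional risk. Write $h=r\,h_{\AMISE}$. Using the first-order condition $h_{\AMISE}^5\mu_2(K)^2R(f'')=R(K)/n$ from Corollary~\ref{cor:amise-bandwidth}, both AMISE terms at $h_{\AMISE}$ are multiples of the same quantity $c_n:=R(K)/(nh_{\AMISE})$. Indeed, the bias term equals $c_n/4$, so
\[
\frac{\AMISE(\widehat f_{r h_{\AMISE}})}{\AMISE(\widehat f_{h_{\AMISE}})}=\frac{\tfrac14 r^4+r^{-1}}{\tfrac54}=:\psi(r).
\]
This is a fixed continuous function on $(0,\infty)$ that does not depend on $n$, and $\psi(1)=1$. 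Applying the continuous mapping theorem to $r=\widehat h_n/h_{\AMISE}\to_p1$ then proves \eqref{eq:plugin-risk-ratio-v5}. This normalization identity is the one step that needs care. Without it, the $n$-dependence of the AMISE would appear to obstruct a direct continuous-mapping argument, and the exact cancellation is what removes it.

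For the truncated version, I would set $\widehat R_n:=\widehat R_{b,\tau}=\max\{\widehat R_b^{\,U},\tau_n\}$. This is nonnegative, and strictly positive when $\tau_n>0$, so it fits the setting of the theorem. The only thing left is to check ratio consistency. We have
\[
\frac{\widehat R_{b,\tau}}{R(f'')}=\max\left\{\frac{\widehat R_b^{\,U}}{R(f'')},\frac{\tau_n}{R(f'')}\right\}.
\]
The first argument tends to $1$ in probability and the second tends to $0$ deterministically. Since $(a,c)\mapsto\max\{a,c\}$ is continuous and $\max\{1,0\}=1$, the maximum tends to $1$ in probability. Equivalently, on the event $\{\widehat R_b^{\,U}/R(f'')>1/2\}$, whose probability tends to one, we have $\tau_n<\widehat R_b^{\,U}$ for large $n$, so the truncation is inactive. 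The first part of the proof then applies verbatim.
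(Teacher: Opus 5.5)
Your proposal is correct and follows the paper's proof essentially step for step: the same cancellation giving $\widehat h_n/h_{\AMISE}=(R(f'')/\widehat R_n)^{1/5}$, the same normalization (the paper's $A=4B$) leading to the $n$-free ratio $(4r^{-1}+r^4)/5$, and the same observation that truncation is inactive with probability tending to one. Your explicit bound on $\Pr(\widehat R_n=0)$ and your continuity-of-$\max$ phrasing spell out in a little more detail steps that the paper states only briefly.
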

\begin{proof}
By \eqref{eq:h-amis} and \eqref{eq:generic-plugin-bandwidth-v5}, on the event $\{\widehat R_n>0\}$,
\[
\frac{\widehat h_n}{h_{\AMISE}} =\left\{\frac{R(f'')}{\widehat R_n}\right\}^{1/5}.
\]
Since $\widehat R_n/R(f'')\xrightarrow{p}1$ and the map $x\mapsto x^{-1/5}$ is continuous in a neighborhood of $1$, the continuous mapping theorem gives
\[
\frac{\widehat h_n}{h_{\AMISE}}\xrightarrow{p} 1.
\]
The definition on $\{\widehat R_n=0\}$ does not affect the conclusion, because $\Pr(\widehat R_n=0)\to0$.

Set
\[
A:=\frac{R(K)}{n h_{\AMISE}}, \qquad B:=\frac{h_{\AMISE}^4}{4}\mu_2(K)^2R(f'').
\]
The first-order condition defining $h_{\AMISE}$ is
\[
h_{\AMISE}^5\mu_2(K)^2R(f'')=\frac{R(K)}{n}.
\]
Dividing both sides by $h_{\AMISE}$ gives
\[
h_{\AMISE}^4\mu_2(K)^2R(f'') = \frac{R(K)}{nh_{\AMISE}},
\]
and hence $A=4B$. Let $a=\widehat h_n/h_{\AMISE}$, then
\[
\AMISE(\widehat f_{a h_{\AMISE}})=Aa^{-1}+Ba^4, \qquad \AMISE(\widehat f_{h_{\AMISE}})=A+B=5B.
\]
Consequently,
\begin{equation}
\label{eq:amise-ratio-a-v5} 
\frac{\AMISE(\widehat f_{a h_{\AMISE}})} {\AMISE(\widehat f_{h_{\AMISE}})} =\frac{4a^{-1}+a^4}{5}.
\end{equation}
Since $a\to_p1$ by \eqref{eq:plugin-ratio-v5}, another application of the continuous mapping theorem gives \eqref{eq:plugin-risk-ratio-v5}.

For the truncated version, if $\widehat R_b^{\,U}\to_p R(f'')$ and $R(f'')>0$, then
\[
\Pr\left\{\widehat R_b^{\,U}<\tau_n\right\} \le \Pr\left\{\abs{\widehat R_b^{\,U}-R(f'')} > R(f'')-\tau_n\right\}\to0,
\]
for all sufficiently large $n$. Hence $\widehat R_{b,\tau}/R(f'')\to_p1$, and the preceding argument applies to the truncated GCPI bandwidth.
\end{proof}

\begin{proposition}[\textbf{Consistency of the U-statistic curvature estimator}]\label{prop:U-consistency-detailed}
Assume the following conditions.
\begin{enumerate}[label=(P\arabic*)]
    \item\label{ass:pilot:P1}
$L\in L^1(\mathbb{R})$, $\int_{\mathbb{R}}L(u)\,du=1$, and the weak second derivative $L''$ belongs to $L^1(\mathbb{R})\cap L^2(\mathbb{R})$.

    \item\label{ass:pilot:P2}
The autocorrelation kernel $G_L$ defined in~\eqref{eq:pilot-autocorrelation} belongs to $L^1(\mathbb{R})\cap L^2(\mathbb{R})$.

    \item\label{ass:pilot:P3}
The density $f$ is bounded and has weak second derivative $f''\in L^2(\mathbb{R})$, with $R(f'')>0$.

    \item\label{ass:pilot:P4} The pilot bandwidth satisfies
    \begin{equation}
\label{eq:pilot-bandwidth-conditions-v5} 
b=b_n\to0, \qquad n b_n^4\to\infty, \qquad n^2 b_n^9\to\infty.
\end{equation}
\end{enumerate}
Then
\begin{equation}
\label{eq:U-consistency-v5}
\widehat R_b^{\,U}\xrightarrow{p}R(f'').
\end{equation}
Consequently, $\frac{\widehat R_b^{\,U}}{R(f'')}\xrightarrow{p} 1$. Under the assumptions of Theorem~\ref{thm:amise-expansion}, with $\mu_2(K)\ne 0$, the truncated GCPI bandwidth~\eqref{eq:gcpi-bandwidth-v5} is oracle-equivalent in the sense of Theorem~\ref{thm:gcpi-oracle} for any $\tau_n\downarrow0$ satisfying $\tau_n/R(f'')\to0$. 

\end{proposition}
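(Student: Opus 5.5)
The plan is a standard bias--variance argument for the U-statistic: show $\E\widehat R_b^U\to R(f'')$ and $\Var(\widehat R_b^U)\to0$, then conclude by Chebyshev. The oracle-equivalence statement then follows at once from Theorem~\ref{thm:gcpi-oracle} applied to the truncated estimator, since $R(f'')>0$ is fixed and $\tau_n\to0$.

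\emph{Bias.} Since the $X_i$ are i.i.d.,
\[
\E\widehat R_b^U=b^{-5}\int\!\!\int G_L\!\left(\frac{x-y}{b}\right)f(x)f(y)\,dx\,dy .
\]
Reversing the computation in Lemma~\ref{lem:diagonal-decomposition}, this equals
\[
\int_{\R}\{(L_b''*f)(x)\}^2\,dx=\|L_b''*f\|_2^2 .
\]
Using $L_b''*f=L_b*f''$ (stated before Lemma~\ref{lem:diagonal-decomposition}), we get $\E\widehat R_b^U=\|L_b*f''\|_2^2$. Since $L\in L^1$ with $\int L=1$, the family $L_b$ is an approximate identity. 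Minkowski's integral inequality together with $L^2$-continuity of translations and dominated convergence, exactly as in the proof of Theorem~\ref{thm:amise-expansion}, give $\|L_b*f''-f''\|_2\to0$. Hence $\E\widehat R_b^U\to R(f'')$.

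To justify $L_b''*f=L_b*f''$, I would test against $\varphi\in C_c^\infty$ and move the two weak derivatives from $L_b$ to $f$. If that is delicate, I would mollify $f$ as in Lemma~\ref{lem:weak-integral-taylor-L2}.

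\emph{Variance.} Write $\widehat R_b^U=\binom n2^{-1}\sum_{i<j}H(X_i,X_j)$ with the symmetrized kernel
\[
H(x,y)=\tfrac12 b^{-5}\{G_L((x-y)/b)+G_L((y-x)/b)\}.
\]
Hoeffding's variance formula gives
\[
\Var\widehat R_b^U\lesssim \frac{4}{n}\Var\{H_1(X_1)\}+\frac{2}{n(n-1)}\E H(X_1,X_2)^2 ,
\qquad H_1(x)=\E H(x,X_2).
\]
For the degenerate term, substitute $x-y=bz$ and use boundedness of $f$:
\[
\E H^2\le b^{-10}\int\!\!\int G_L((x-y)/b)^2 f(x)f(y)\,dx\,dy\le \|f\|_\infty b^{-9}\|G_L\|_2^2 .
\]
So this contribution is $O(n^{-2}b^{-9})\to0$ by \ref{ass:pilot:P4}.

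For the linear term, $H_1(x)=b^{-5}(\bar G_{b}*f)(x)$ with $\bar G$ the symmetrized $G_L$. A crude bound uses $\|G_L(\cdot/b)\|_1=b\|G_L\|_1$ and gives $|H_1|\le\|f\|_\infty b^{-4}\|G_L\|_1$, so
\[
\E H_1^2\le\|f\|_\infty^2\,b^{-8}\,\|G_L\|_1^2 .
\]
This only yields $O(n^{-1}b^{-8})$, which is too weak for \ref{ass:pilot:P4}.

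The better route writes $H_1$ in derivative form: $b^{-5}G_L(\cdot/b)=\widetilde{L_b''}*L_b''$, so $H_1=\widetilde{L_b''}*L_b''*f=\widetilde{L_b''}*(L_b*f'')$. Moving a second weak derivative onto $L_b*f''$ is not available, since only $f''\in L^2$. Instead I would bound $\E H_1(X)^2\le\|f\|_\infty\|H_1\|_2^2$ and apply Young's inequality:
\[
\|H_1\|_2\le\|L_b''\|_1\,\|L_b*f''\|_2\le b^{-2}\|L''\|_1\|L\|_1\|f''\|_2 .
\]
Hence $\E H_1^2=O(b^{-4})$ and the linear term is $O(n^{-1}b^{-4})\to0$, which is exactly the condition $nb^4\to\infty$ in \ref{ass:pilot:P4}.

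The main obstacle is this linear-term bound. I expect the bookkeeping of the weak-derivative identities ($L_b''=b^{-3}L''(\cdot/b)$, the convolution identities, and Fubini with only $L^1\cap L^2$ integrability) to be the delicate part; the rest is routine. Combining the two pieces gives $\E(\widehat R_b^U-R(f''))^2\to0$, hence~\eqref{eq:U-consistency-v5}.
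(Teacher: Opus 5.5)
Your proposal is correct and follows the paper's proof essentially step for step. Both arguments rest on the same pieces:
\begin{itemize}
\item the identity $\E\widehat R_b^U=\|L_b*f''\|_2^2$, combined with the approximate-identity limit;
\item the Hoeffding variance bound, with the degenerate term controlled by $\|f\|_\infty b^{-9}\|G_L\|_2^2$ and the projection term by the Young-inequality estimate $\|H_1\|_2\le\|L_b''\|_1\|L_b*f''\|_2\le b^{-2}\|L''\|_1\|L\|_1\|f''\|_2$;
\item Chebyshev's inequality, followed by Theorem~\ref{thm:gcpi-oracle}.
\end{itemize}
Your symmetrization of $G_L$ is harmless but unnecessary, since $G_L(-z)=G_L(z)$ automatically.
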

\begin{proof}
Define $\psi_b(x):=L_b''(x)=b^{-3}L''(x/b)$ and
\[
H_b(x,y):=\int_{\R}\psi_b(t-x)\psi_b(t-y)\,dt =b^{-5}G_L\left(\frac{x-y}{b}\right).
\]
Then
\[
\widehat R_b^{\,U}=\frac{1}{n(n-1)}\sum_{i\ne j}H_b(X_i,X_j),
\]
is a symmetric $U$-statistic of order two with kernel $H_b$ depending on $n$ through $b=b_n$.
\\
\emph{Step 1: Expectation.} Let
\[
m_b(t):=\E\{\psi_b(t-X_1)\}=\int_{\mathbb{R}}\psi_b(t-x)f(x)\,dx.
\]
Because convolution commutes with weak differentiation,
\[
m_b=L_b''*f=L_b*f'',
\]
in $L^2(\R)$. Using independence of $X_1$ and $X_2$, we obtain
\begin{align*}
    \E\widehat R_b^{\,U} =\E H_b(X_1,X_2) &=\E\int \psi_b(t-X_1)\psi_b(t-X_2)\,dt \\
    &=\int \E\{\psi_b(t-X_1)\}\E\{\psi_b(t-X_2)\}\,dt 
    =\int m_b(t)^2\,dt
    =\norm{L_b*f''}_2^2.
\end{align*}
Since $L_b$ is an approximate identity and $f''\in L^2(\R)$, we have $L_b*f''\to f'' $ in $L^2(\R)$.
It follows that
\begin{equation}
\label{eq:expectation-converges-v5} 
\E\widehat R_b^{\,U}=\norm{L_b*f''}_2^2\to \norm{f''}_2^2=R(f'').
\end{equation}
Indeed,
\[
\left|\norm{L_b*f''}_2^2-\norm{f''}_2^2\right| \le \norm{L_b*f''-f''}_2\{\norm{L_b*f''}_2+\norm{f''}_2\}\to0.
\]
\emph{Step 2: Variance.} For a symmetric order-two $U$-statistic with kernel depending on $n$, the standard variance decomposition gives
\begin{equation}
\label{eq:Uvariance-bound-v5} 
\Var(\widehat R_b^{\,U}) \le \frac{4}{n}\E\{h_b(X_1)^2\} +\frac{2}{n(n-1)}\E\{H_b(X_1,X_2)^2\},
\end{equation}
where $h_b(x):=\E\{H_b(x,X_2)\}$. We now bound the two terms on the right-hand side.

First,
\[
h_b(x)=\int_{\mathbb{R}}\psi_b(t-x)m_b(t)\,dt.
\]
Hence, by Young's convolution inequality, $\norm{h_b}_2\le \norm{\psi_b}_1\norm{m_b}_2$.
Since
\[
\norm{\psi_b}_1=b^{-2}\norm{L''}_1, \qquad \norm{m_b}_2=\norm{L_b*f''}_2\le \norm{L}_1\norm{f''}_2,
\]
we get
\[
\norm{h_b}_2^2\le b^{-4}\norm{L''}_1^2\norm{L}_1^2\norm{f''}_2^2.
\]
Because $f$ is bounded,
\begin{equation}
\label{eq:projection-bound-v5} 
\E\{h_b(X_1)^2\} =\int_{\mathbb{R}}h_b(x)^2f(x)\,dx \le \norm{f}_{\infty}\norm{h_b}_2^2 \le C b^{-4},
\end{equation}
for a constant $C$ independent of $n$.

Second, using the definition of $H_b$ and boundedness of $f$,
\begin{align}
    \E\{H_b(X_1,X_2)^2\}
    &=b^{-10}\int_{\mathbb{R}}\int_{\mathbb{R}} G_L\left(\frac{x-y}{b}\right)^2f(x)f(y)\,dx\,dy \\
    &\le b^{-10}\norm{f}_{\infty}\int_{\mathbb{R}} f(x)\left\{\int_{\mathbb{R}} G_L\left(\frac{x-y}{b}\right)^2dy\right\}dx \\
    &= b^{-9}\norm{f}_{\infty}\norm{G_L}_2^2.
\end{align}
Combining this bound with \eqref{eq:projection-bound-v5} and \eqref{eq:Uvariance-bound-v5}, we get
\[
\Var(\widehat R_b^{\,U}) \le C_1\frac{1}{n b^4}+C_2\frac{1}{n^2 b^9}.
\]
The pilot-bandwidth conditions \eqref{eq:pilot-bandwidth-conditions-v5} imply
\begin{equation}
\label{eq:variance-converges-v5} 
\Var(\widehat R_b^{\,U})\to0.
\end{equation}
\emph{Step 3: Consistency.} By Chebyshev's inequality,
\[
\widehat R_b^{\,U}-\E\widehat R_b^{\,U}\xrightarrow{p}0.
\]
Together with \eqref{eq:expectation-converges-v5}, this proves \eqref{eq:U-consistency-v5}. Since $R(f'')>0$, ordinary consistency implies ratio consistency:
\[
\frac{\widehat R_b^{\,U}}{R(f'')}\xrightarrow{p}1.
\]
The final statement follows by applying Theorem \ref{thm:gcpi-oracle} to the ratio-consistent curvature estimator and the corrected plug-in formula \eqref{eq:gcpi-bandwidth-v5}.
\end{proof}

\begin{remark}[\textbf{Role and strength of the pilot assumptions}]
\label{rem:pilot-assumptions}
The assumptions in Proposition~\ref{prop:U-consistency-detailed} are sufficient conditions for a transparent consistency proof and are not claimed to be sharp.
\begin{itemize}
    \item Condition~\ref{ass:pilot:P1} ensures that the pilot kernel is normalized and that the derivative estimator is well defined. Since
\[
 G_L=\widetilde{L''}*L'',\qquad \widetilde{L''}(y)=L''(-y),
\]
Young's convolution inequality implies $G_L\in L^1(\mathbb R)\cap L^2(\mathbb R)$ whenever $L''\in L^1(\mathbb R)\cap L^2(\mathbb R)$. This is the regularity needed for the moment and second-moment bounds involving the off-diagonal kernel.

\item  Condition~\ref{ass:pilot:P2} uses boundedness of $f$ to control the variance terms. The condition $f''\in L^2(\mathbb R)$ identifies the target weak-curvature functional $R(f'')$, while $R(f'')>0$ is needed for ratio consistency and for the nondegenerate plug-in bandwidth formula.

\item Condition~\ref{ass:pilot:P3} requires the pilot bandwidth to decrease slowly enough. The condition $b_n\to0$ makes the pilot smoothing bias vanish. The conditions $nb_n^4\to\infty$ and $n^2b_n^9\to\infty$ force the two variance bounds in the proof to vanish. For example, if $b_n=n^{-\alpha}$, then these conditions hold whenever $0<\alpha<\frac{2}{9}$.
Indeed, $nb_n^4=n^{1-4\alpha}\to\infty$ requires $\alpha<1/4$, while $n^2b_n^9=n^{2-9\alpha}\to\infty$ requires $\alpha<2/9$.
\end{itemize}

Sharper assumptions, optimal pilot-bandwidth rates, limiting distributions, and fully data-driven pilot selection are not pursued here. Those refinements belong to the broader integrated-squared-derivative and plug-in bandwidth literature, whereas the present goal is only to show that the classical plug-in AMISE principle can be formulated for the weak-curvature functional $R(f'')$.
\end{remark}

\subsection{Position relative to standard bandwidth selectors}

Table~\ref{tab:selector-comparison} compares the proposed GCPI selector with common bandwidth-selection methods. Its purpose is to clarify the specific role of GCPI: it is not presented as a universally superior practical rule, but as a theoretically justified plug-in selector under the weak-curvature assumptions considered in this paper.

\begin{table}[ht]
\centering
\caption{Comparison of bandwidth selectors. ``Weak curvature'' means that the functional $R(f'')$ is interpreted with $f''$ as a weak second derivative.}
\label{tab:selector-comparison}
\begin{tabularx}{\linewidth}{@{}>{\raggedright\arraybackslash}p{0.18\linewidth}
                            >{\raggedright\arraybackslash}X
                            >{\raggedright\arraybackslash}X
                            >{\raggedright\arraybackslash}X@{}}
\toprule
Selector & Main principle & Relation to curvature & Main limitation \\
\midrule
Normal-reference&Gaussian reference model&Implicit parametric curvature approximation
&Can oversmooth multimodal or non-Gaussian densities
\\
\addlinespace
Least-squares CV&Empirical estimation of integrated squared error
&No explicit curvature estimation&Can be highly variable and may undersmooth
\\
\addlinespace
Classical plug-in&Plug-in estimation of the AMISE curvature term
&Targets quantities such as $R(f'')$&Usually formulated through ordinary derivative or smooth-pilot assumptions
\\
\addlinespace
GCPI&U-statistic estimation of weak curvature&Targets $R(f'')$ interpreted weakly
&Requires pilot bandwidth, second-derivative estimation, and truncation
\\
\bottomrule
\end{tabularx}
\end{table}

This comparison underscores that the main contribution lies not in the plug-in methodology itself, which is classical, but in showing that the standard first-order AMISE plug-in logic can be formulated with the weak-curvature functional $R(f'')$. The $C^{1,1}\setminus C^2$ examples then show how this weak-curvature target arises in concrete nonsmooth densities with Lipschitz first derivatives.

\section{Multivariate Extension Under Weak Hessians}
\label{sec:multivariate}

The preceding analysis was developed first in one dimension to make the role of the weak second derivative transparent. Multivariate kernel smoothing is a substantial subject in its own right; see \cite{ChaconDuong2010,ChaconDuong2013,ChaconDuong2018,ChaconDuongWand2011,Scott2015,WandJones1994} for multivariate bandwidth, smoothing, and density-derivative theory. The multivariate analogue is also the setting in which the present regularity formulation is naturally expressed through weak Hessians. The extension below is deliberately restricted to scalar bandwidths; it is not intended to compete with full bandwidth-matrix plug-in theory.

Let $X_1,\ldots,X_n$ be independent observations in $\mathbb{R}^d$ with density $f:\mathbb{R}^d\to[0,\infty)$. For a scalar bandwidth $h>0$ and kernel $K:\R^d\to\R$, define
\begin{equation}
\label{eq:multivariate-kde} 
\widehat f_h(x):=\frac1{nh^d}\sum_{i=1}^n K\left(\frac{x-X_i}{h}\right), \qquad x\in\R^d.
\end{equation}
We assume
\begin{equation}
\label{eq:multivariate-kernel-moments} 
\int_{\R^d}K(u)\,du=1, \qquad \int_{\R^d}uK(u)\,du=0, \qquad \int_{\R^d}\|u\|^2|K(u)|\,du<\infty,
\end{equation}
with $K\in L^1(\R^d)\cap L^2(\R^d)$. Define the second moment matrix
\begin{equation}
\label{eq:M2K} 
M_2(K):=\int_{\R^d}uu^\top K(u)\,du.
\end{equation}
For an isotropic second-order kernel, $M_2(K)=m_2(K)I_d$.

\begin{definition}[\textbf{$C^{1,1}$ smoothness in $\R^d$}]
A differentiable function $f:\R^d\to\R$ belongs to $C^{1,1}(\R^d)$ if its gradient is Lipschitz continuous, that is, if there exists $L\ge0$ such that
\[
\|\nabla f(x)-\nabla f(y)\|\le L\|x-y\|, \qquad x,y\in\R^d.
\]
Then the weak Hessian $D^2f$ is represented almost everywhere by an essentially bounded matrix-valued function, and this representative satisfies $\|D^2f(x)\|_{\mathrm{op}}\le L$ almost everywhere.
\end{definition}

For pointwise estimates, we use the elementary consequence of the Lipschitz continuity of the gradient:
\begin{equation}
\label{eq:multi-lipschitz-remainder} 
\big|f(x+v)-f(x)-\nabla f(x)^\top v\big| \le \frac{L}{2}\|v\|^2, \qquad x,v\in\R^d.
\end{equation}
For the integrated AMISE expansion, the corresponding second-order identity is used in an $L^2$ sense. The following lemma records the precise form needed below.

\begin{lemma}[\textbf{Directional weak Taylor identity}]
\label{lem:multi-directional-weak-taylor}
Let $f\in C^{1,1}(\mathbb R^d)$ and suppose that $D^2f\in L^2(\mathbb R^d)$, meaning that all weak second partial derivatives belong to $L^2(\mathbb R^d)$. Then, for every fixed $u\in\mathbb R^d$ and $h>0$,
\begin{equation}
\label{eq:multi-weak-directional-taylor} 
f(\cdot-hu)-f(\cdot)+h u^\top \nabla f(\cdot) =h^2\int_0^1(1-t)\,u^\top D^2f(\cdot-thu)u\,dt,
\end{equation}
in $L^2(\mathbb R^d)$.
\end{lemma}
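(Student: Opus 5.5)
The plan is to prove \eqref{eq:multi-weak-directional-taylor} \emph{pointwise almost everywhere} in $x$ by reducing to the one-dimensional Lemma~\ref{lem:integral-taylor} along lines, and then to read it as an identity in $L^2(\mathbb R^d)$ using the square-integrability of $D^2f$. I prefer this over the mollification route of Lemma~\ref{lem:weak-integral-taylor-L2}. Here only $D^2f\in L^2$ is assumed, not $f,\nabla f\in L^2$, so $f_\varepsilon\to f$ in $L^2$ is unavailable. In the pointwise route the left-hand side inherits square-integrability from the right-hand side.

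\textbf{Step 1 (reduction to lines).} The case $u=0$ is trivial, so fix $u\ne0$ and $h>0$. For $x\in\mathbb R^d$ set $g_x(s):=f(x-shu)$, $s\in\mathbb R$. Since $\nabla f$ is $L$-Lipschitz, $g_x'(s)=-hu^\top\nabla f(x-shu)$ is Lipschitz in $s$ with constant $Lh^2\|u\|^2$. Hence $g_x\in C^{1,1}(\mathbb R)$, and Lemma~\ref{lem:integral-taylor} with base point $0$ and increment $v=1$ gives
\[
f(x-hu)-f(x)+hu^\top\nabla f(x)=\int_0^1(1-t)\,g_x''(t)\,dt
\]
for every $x$, where $g_x''$ is any representative of the weak second derivative of $g_x$.

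\textbf{Step 2 (identifying $g_x''$; the main obstacle).} We must show that, for almost every $x$, $g_x''(t)=h^2u^\top D^2f(x-thu)u$ for almost every $t$. The difficulty is that $D^2f$ is only defined almost everywhere in $\mathbb R^d$, and its restriction to a line is a priori meaningless.

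I would argue as follows. Each component of $\nabla f$ is Lipschitz, so by Rademacher's theorem it is differentiable off a null set $N\subset\mathbb R^d$. For Lipschitz functions the a.e.\ classical gradient coincides with the weak gradient, so off a (possibly enlarged) null set $N$ the classical directional derivative of $\nabla f$ in direction $u$ equals $D^2f\,u$ for the chosen representative.

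Next, choose orthonormal coordinates with first axis $u/\|u\|$ and apply Fubini's theorem to $\mathbf 1_N$. For almost every $x$, the set $\{t:x-thu\in N\}$ is Lebesgue-null. On such lines, the chain rule at points of differentiability gives
\[
g_x''(t)=h^2u^\top D^2f(x-thu)u \quad\text{for a.e. } t.
\]
Since $g_x'$ is absolutely continuous, its a.e.\ derivative is its weak derivative, which completes the identification.

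\textbf{Step 3 ($L^2$ conclusion).} Define
\[
A(x):=h^2\int_0^1(1-t)\,u^\top D^2f(x-thu)u\,dt .
\]
This function is measurable by Fubini, and Minkowski's integral inequality with translation invariance gives
\[
\|A\|_2\le \tfrac12 h^2\|u\|^2\,\|D^2f\|_2<\infty,
\]
where $\|D^2f\|_2$ is the $L^2$ norm of the operator (or Frobenius) norm of the Hessian. By Steps 1--2, the left-hand side of \eqref{eq:multi-weak-directional-taylor} equals $A$ almost everywhere. It therefore belongs to $L^2(\mathbb R^d)$, and the identity holds in $L^2(\mathbb R^d)$.
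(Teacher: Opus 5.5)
Your argument is correct, but it takes a different route from the paper's proof.

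The paper mollifies. It writes the classical Taylor formula for $f_\varepsilon=f*\rho_\varepsilon$ and sends the right-hand side to its limit in $L^2(\mathbb R^d)$ via $D^2f_\varepsilon=(D^2f)*\rho_\varepsilon\to D^2f$. The left-hand side converges only locally uniformly, hence in the sense of distributions, and uniqueness of distributional limits finishes the proof. So the paper's multivariate proof also never needs $f_\varepsilon\to f$ in $L^2$. The obstacle you raise for the route of Lemma~\ref{lem:weak-integral-taylor-L2} is avoided there exactly as in your Step~3: the left-hand side inherits square-integrability from the right-hand side.

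You instead prove the identity at almost every $x$ by restricting to lines and applying Lemma~\ref{lem:integral-taylor}. All the real work then sits in Step~2, which combines three ingredients:
\begin{enumerate}
\item Rademacher's theorem;
\item equality of a.e.\ and weak derivatives for Lipschitz functions;
\item a Fubini argument showing that for a.e.\ $x$ the line $t\mapsto x-thu$ meets the exceptional null set in a null set of $t$.
\end{enumerate}

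What your route buys is an elementary, distribution-free argument that reuses the one-dimensional lemma directly. What it costs is an essential dependence on the Lipschitz continuity of $\nabla f$. It also needs some routine measure-theoretic bookkeeping, for instance measurability of $t\mapsto D^2f(x-thu)$ for a.e.\ $x$ and of $(x,t)\mapsto D^2f(x-thu)$; fixing a Borel representative of $D^2f$ handles this.

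The paper's mollification argument is shorter and never uses the Lipschitz bound. It goes through unchanged for $f\in C^1(\mathbb R^d)$ with $D^2f\in L^2(\mathbb R^d)$, and with minor changes for Sobolev-type $f$.
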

\begin{proof}
Let $\rho_\varepsilon$ be a standard mollifier and set $f_\varepsilon=f*\rho_\varepsilon$. Since $f\in C^{1,1}(\mathbb R^d)$, we have $f_\varepsilon\to f$ and $\nabla f_\varepsilon\to\nabla f$ locally uniformly. Since $D^2f\in L^2(\mathbb R^d)$, the weak Hessians satisfy
\[
D^2f_\varepsilon=(D^2f)*\rho_\varepsilon \to D^2f \qquad\text{in }L^2(\mathbb R^d).
\]
For the smooth function $f_\varepsilon$, the classical integral Taylor formula gives, for every $x\in\mathbb R^d$,
\[
f_\varepsilon(x-hu)-f_\varepsilon(x)+h u^\top\nabla f_\varepsilon(x) = h^2\int_0^1(1-t)\, u^\top D^2f_\varepsilon(x-thu)u\,dt.
\]
We pass to the limit in $L^2(\mathbb R^d)$ on the right-hand side and in the sense of distributions on the left-hand side. For the right-hand side, Minkowski's inequality and translation invariance of the $L^2$ norm give
\[
\begin{aligned}
&\left\|
\int_0^1(1-t)\,
u^\top\{D^2f_\varepsilon(\cdot-thu)-D^2f(\cdot-thu)\}u\,dt
\right\|_2  \\
&\qquad\le
\int_0^1(1-t)
\|u^\top\{D^2f_\varepsilon(\cdot-thu)-D^2f(\cdot-thu)\}u\|_2\,dt \\
&\qquad\le
\frac12\|u\|^2\|D^2f_\varepsilon-D^2f\|_{F,2}
\to0.
\end{aligned}
\]
The left-hand side also converges in the sense of distributions to
\[
f(\cdot-hu)-f(\cdot)+h u^\top\nabla f(\cdot).
\]
The $L^2$ bound
\[
\left\| \int_0^1(1-t)\,u^\top D^2f(\cdot-thu)u\,dt \right\|_2 \le \frac12\|u\|^2\|D^2f\|_{F,2},
\]
shows that the limiting right-hand side belongs to $L^2(\mathbb R^d)$. Therefore the distributional limit on the left is represented by this $L^2$ function, and the displayed identity holds in $L^2(\mathbb R^d)$.
\end{proof}

\begin{theorem}[\textbf{Multivariate pointwise bias bound}]\label{thm:multi-pointwise-bias}
Let $f\in C^{1,1}(\R^d)$ and let $\nabla f$ have Lipschitz constant $L$. Suppose $K$ satisfies \eqref{eq:multivariate-kernel-moments}. Then, for every $x\in\mathbb{R}^d$ and $h>0$,
\begin{equation}
\label{eq:multi-pointwise-bias-bound} 
\left|\mathbb E\widehat f_h(x)-f(x)\right| \le \frac{Lh^2}{2}\int_{\R^d}\|u\|^2|K(u)|\,du.
\end{equation}
\end{theorem}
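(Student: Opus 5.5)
The plan mirrors the proof of Theorem~\ref{thm:pointwise-bias}, with the one-dimensional integral Taylor formula replaced by the Lipschitz-gradient remainder bound \eqref{eq:multi-lipschitz-remainder}. First, write the expectation as a convolution exactly as in \eqref{eq:expectation-convolution}. By the change of variables $y=x-hu$ in $\E\widehat f_h(x)=h^{-d}\int K((x-y)/h)f(y)\,dy$, one gets
\[
\E\widehat f_h(x)=\int_{\R^d}K(u)f(x-hu)\,du .
\]
Using $\int K=1$, this gives
\[
\E\widehat f_h(x)-f(x)=\int_{\R^d}K(u)\{f(x-hu)-f(x)\}\,du .
\]

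Next, subtract the first-order term. Since $\int uK(u)\,du=0$, the quantity $\int K(u)\,h\,u^\top\nabla f(x)\,du$ vanishes. Hence
\[
\E\widehat f_h(x)-f(x)=\int_{\R^d}K(u)\bigl\{f(x-hu)-f(x)+h\,\nabla f(x)^\top u\bigr\}\,du .
\]
Integrability of each piece needs a short check. The term $|K(u)|\,\|u\|$ is integrable because $\|u\|\le 1+\|u\|^2$ and $K\in L^1$. The term $K(u)f(x-hu)$ is integrable because $f$ grows at most quadratically, by \eqref{eq:multi-lipschitz-remainder} with base point $x$, and $\int\|u\|^2|K|<\infty$. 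So the splitting of the integral is legitimate.

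Then apply \eqref{eq:multi-lipschitz-remainder} with $v=-hu$. This bounds the braces by $\frac{L}{2}h^2\|u\|^2$. Taking absolute values inside the integral yields \eqref{eq:multi-pointwise-bias-bound}, and the bound is uniform in $x$.

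For completeness, \eqref{eq:multi-lipschitz-remainder} itself follows from
\[
f(x+v)-f(x)-\nabla f(x)^\top v=\int_0^1\{\nabla f(x+tv)-\nabla f(x)\}^\top v\,dt ,
\]
whose absolute value is at most $\int_0^1 Lt\|v\|^2\,dt=\frac{L}{2}\|v\|^2$. This representation is valid because $f$ is $C^1$, as its gradient is continuous. The only mild obstacle is this integrability bookkeeping; no weak-Hessian argument is needed.
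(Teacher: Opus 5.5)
Your proposal is correct and follows the paper's proof step for step. You write the expectation as $\int K(u)f(x-hu)\,du$, cancel the linear term using $\int uK=0$, and bound the remainder via \eqref{eq:multi-lipschitz-remainder} with $v=-hu$. Your integrability checks and your derivation of the remainder bound (fundamental theorem of calculus along the segment) are correct details that the paper leaves implicit.
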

\begin{proof}
The expectation is
\[
\mathbb E\widehat f_h(x)=\int_{\R^d}K(u)f(x-hu)\,du.
\]
By \eqref{eq:multi-lipschitz-remainder} with $v=-hu$,
\[
f(x-hu)=f(x)-h u^\top\nabla f(x)+r_h(x,u), \qquad |r_h(x,u)|\le \frac{Lh^2}{2}\|u\|^2.
\]
The first-order term vanishes because $\int uK(u)\,du=0$. Hence
\[
|\mathbb E\widehat f_h(x)-f(x)| \le \frac{Lh^2}{2}\int_{\R^d}\|u\|^2|K(u)|\,du,
\]
which gives \eqref{eq:multi-pointwise-bias-bound}.
\end{proof}

For exact integrated constants, assume that all weak second partial derivatives belong to $L^2(\R^d)$. Let
\[
\|D^2f\|_{F,2}^2=\int_{\R^d}\|D^2f(x)\|_F^2\,dx.
\]
The leading bias operator is
\begin{equation}
\label{eq:LK-operator} 
\mathcal L_K f(x)=\tr\{M_2(K)D^2f(x)\}.
\end{equation}
Since $M_2(K)$ is finite and $D^2f\in L^2(\mathbb R^d)$, we have $\mathcal L_K f\in L^2(\mathbb R^d)$. If $M_2(K)=m_2(K)I_d$, then $\mathcal L_K f=m_2(K)\Delta f$.

\begin{theorem}[\textbf{Multivariate AMISE expansion}]
\label{thm:multi-amise}
Assume that $K\in L^1(\mathbb{R}^d)\cap L^2(\mathbb{R}^d)$ satisfies \eqref{eq:multivariate-kernel-moments}, and that $f\in C^{1,1}(\mathbb{R}^d)\cap L^2(\mathbb{R}^d)$ has weak Hessian $D^2f\in L^2(\mathbb{R}^d)$. If $h\to0$ and $nh^d\to\infty$, then
\begin{equation}
\label{eq:multi-mise-expansion} \MISE(\widehat f_h)=\frac{h^4}{4}\|\mathcal L_K f\|_2^2+\frac{R(K)}{nh^d}+ o\{h^4+(nh^d)^{-1}\}.
\end{equation}
\end{theorem}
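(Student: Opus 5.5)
The plan is to mirror the proof of Theorem~\ref{thm:amise-expansion}, with Lemma~\ref{lem:multi-directional-weak-taylor} taking the place of Lemma~\ref{lem:weak-integral-taylor-L2}. First I handle the bias. Set $B_h:=\E\widehat f_h-f=K_h*f-f$, where now $K_h(x)=h^{-d}K(x/h)$, so that $B_h=\int_{\R^d}K(u)\{f(\cdot-hu)-f\}\,du$. Integrating the $L^2$ identity \eqref{eq:multi-weak-directional-taylor} against $K(u)\,du$ and using $\int uK(u)\,du=0$ to remove the gradient term gives
\[
A_h:=\frac{B_h}{h^2}=\int_{\R^d}K(u)\int_0^1(1-t)\,u^\top D^2f(\cdot-thu)u\,dt\,du
\]
in $L^2(\R^d)$. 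To justify interchanging the $du$ integral with the $L^2$ identity, I will treat both sides as Bochner integrals in $L^2$. The bound $\|u^\top D^2f(\cdot-thu)u\|_2\le\|u\|^2\|D^2f\|_{F,2}$ combined with $\int\|u\|^2|K(u)|\,du<\infty$ makes the integrand Bochner integrable. Alternatively, the identity can be checked pointwise almost everywhere using Fubini, since $f\in C^{1,1}$.

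Next, since $\int_{\R^d}K(u)\,u^\top D^2f\,u\,du=\tr\{M_2(K)D^2f\}=\mathcal L_Kf$, I get
\[
A_h-\tfrac12\mathcal L_Kf=\int K(u)\int_0^1(1-t)\,u^\top\{D^2f(\cdot-thu)-D^2f\}u\,dt\,du .
\]
Minkowski's integral inequality bounds the $L^2$ norm of this by $\int|K(u)|\|u\|^2\int_0^1(1-t)\|D^2f(\cdot-thu)-D^2f\|_{F,2}\,dt\,du$. For each fixed $(u,t)$, the integrand tends to $0$ by $L^2$ continuity of translations, applied entrywise to the Hessian. It is dominated by $2\|u\|^2|K(u)|\|D^2f\|_{F,2}$. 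Dominated convergence then gives $A_h\to\frac12\mathcal L_Kf$ in $L^2$, and therefore $\|B_h\|_2^2=\frac{h^4}{4}\|\mathcal L_Kf\|_2^2+o(h^4)$.

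For the variance, I repeat the argument of Lemma~\ref{lem:integrated-variance} in $\R^d$. Fubini gives exactly $\int\Var\widehat f_h=\frac{R(K)}{nh^d}-\frac1n\|K_h*f\|_2^2$, because $\int K_h^2=h^{-d}R(K)$. Young's inequality gives $\|K_h*f\|_2\le\|K\|_1\|f\|_2$, and this is where $f\in L^2(\R^d)$ is used. So the correction term is $O(n^{-1})=h^d\,O((nh^d)^{-1})=o((nh^d)^{-1})$ as $h\to0$. Adding this to the bias expansion through the bias--variance decomposition yields \eqref{eq:multi-mise-expansion}.

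I expect the main obstacle to be the rigorous passage from the $L^2$ identity for each fixed $u$ to the integrated identity for $A_h$, which requires measurability in $u$ and Bochner integrability. I will settle it using the pointwise version available for $C^{1,1}$ functions. Lemma~\ref{lem:integral-taylor}, applied along the line $s\mapsto x-shu$, gives the identity for every $x$ with an a.e.\ Hessian representative, so Fubini applies directly once the absolute integrability $\int|K(u)|\|u\|^2\|D^2f\|_{F,2}\,du<\infty$ is noted.
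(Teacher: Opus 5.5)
Your proposal is correct and follows essentially the same route as the paper's proof. You integrate the directional weak Taylor identity against $K(u)\,du$ and show $B_h/h^2\to\tfrac12\mathcal L_K f$ in $L^2$ via Minkowski, translation continuity and dominated convergence, then use the $d$-dimensional integrated variance identity with $n^{-1}=o((nh^d)^{-1})$. Your explicit justification of the interchange (via Bochner integrals or Fubini) is more careful than the paper's one-line appeal. One small correction: for $d\ge2$, the line-wise Taylor identity with an arbitrary a.e.\ Hessian representative holds only for almost every $x$, not every $x$, but that is all your argument needs.
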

\begin{proof}
Let $B_h(x)=\mathbb{E}\widehat f_h(x)-f(x)$. Using Lemma~\ref{lem:multi-directional-weak-taylor} under the kernel integral and the moment condition $\int_{\mathbb{R}^d}uK(u)\,du=0$, we obtain
\[
\frac{B_h(x)}{h^2}=\int_{\mathbb{R}^d}K(u)\int_0^1(1-t)u^\top D^2f(x-thu)u\,dt\,du,
\]
in $L^2(\mathbb{R}^d)$. We claim that
\[
\frac{B_h}{h^2}\to \frac12\mathcal L_K f\qquad \text{in } L^2(\mathbb{R}^d).
\]
Indeed,
\[
\begin{aligned}
&\left\|\frac{B_h}{h^2}-\frac12\mathcal L_K f\right\|_2\\
&\quad\le\int_{\mathbb{R}^d}|K(u)|\int_0^1(1-t)\left\|u^\top\{D^2f(\cdot-thu)-D^2f(\cdot)\}u\right\|_2\,dt\,du.
\end{aligned}
\]
For each fixed $u$ and $t$, translation continuity in $L^2(\mathbb{R}^d)$ for each weak second partial derivative gives
\[
\left\|u^\top\{D^2f(\cdot-thu)-D^2f(\cdot)\}u\right\|_2\to 0 \qquad \text{as }h\to0.
\]
Moreover,
\[
\left\|u^\top\{D^2f(\cdot-thu)-D^2f(\cdot)\}u\right\|_2\le 2\|u\|^2\|D^2f\|_{F,2}.
\]
The dominating function
\[
g(u,t):=2\|u\|^2 |K(u)|(1-t)\|D^2 f\|_{F,2},
\]
is integrable over $\mathbb{R}^d\times[0,1]$ by \eqref{eq:multivariate-kernel-moments}. Hence dominated convergence gives
\[
\frac{B_h}{h^2}\to \frac12\mathcal L_K f \qquad \text{in } L^2(\mathbb{R}^d).
\]
Therefore,
\[
\|B_h\|_2^2=\frac{h^4}{4}\|\mathcal L_K f\|_2^2+o(h^4).
\]
The integrated variance satisfies
\[
\int_{\mathbb{R}^d}\Var\{\widehat f_h(x)\}\,dx = \frac{R(K)}{nh^d} + O(n^{-1}),
\]
by the same argument as in Lemma~\ref{lem:integrated-variance}, with $h$ replaced by $h^d$ in the normalization. Since $h\to0$,
\[
n^{-1}=h^d(nh^d)^{-1}= o((nh^d)^{-1}).
\]
Combining the integrated squared bias expansion with the integrated variance expansion proves \eqref{eq:multi-mise-expansion}.
\end{proof}

\begin{corollary}[\textbf{Scalar-bandwidth multivariate AMISE rate}]
\label{cor:multi-amise-rate}
Under the assumptions of Theorem~\ref{thm:multi-amise}, if $h_n\asymp n^{-1/(d+4)}$, then
\[
\MISE(\widehat f_{h_n}) = O\left(n^{-4/(d+4)}\right).
\]
If $\|\mathcal L_K f\|_2>0$, the AMISE-optimal scalar bandwidth is
\begin{equation}
\label{eq:multi-hamise} 
h_{\AMISE}=\left(\frac{dR(K)}{\|\mathcal L_K f\|_2^2 n}\right)^{1/(d+4)}.
\end{equation}
\end{corollary}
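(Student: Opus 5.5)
The plan has two parts: the rate statement and the closed-form minimiser. Both follow from the expansion \eqref{eq:multi-mise-expansion} in Theorem~\ref{thm:multi-amise}.

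\emph{Rate.} Since $h_n\asymp n^{-1/(d+4)}$, we have $h_n\to0$ and $nh_n^d\asymp n^{4/(d+4)}\to\infty$, so Theorem~\ref{thm:multi-amise} applies. Then
\[
h_n^4\asymp n^{-4/(d+4)} \quad\text{and}\quad (nh_n^d)^{-1}\asymp n^{-1+d/(d+4)}=n^{-4/(d+4)}.
\]
Both leading terms, $\tfrac{h^4}{4}\|\mathcal L_K f\|_2^2$ and $R(K)/(nh^d)$, are therefore $O(n^{-4/(d+4)})$, and so is the remainder $o\{h^4+(nh^d)^{-1}\}$. The rate follows. One could also bypass the expansion and argue, as in Theorem~\ref{thm:mise-bound}, from a non-asymptotic bound: combine Theorem~\ref{thm:multi-pointwise-bias}'s Minkowski analogue for the integrated bias with the variance bound $R(K)/(nh^d)$.

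\emph{Optimal bandwidth.} Define
\[
\AMISE(h)=\frac{h^4}{4}\|\mathcal L_K f\|_2^2+\frac{R(K)}{nh^d}, \qquad h>0.
\]
Assuming $\|\mathcal L_K f\|_2>0$ and $R(K)>0$, this function is strictly convex in $h$, since it is a sum of positive multiples of $h^4$ and $h^{-d}$. It also tends to $+\infty$ both as $h\downarrow0$ and as $h\to\infty$. Hence its unique stationary point is the global minimiser. The derivative is
\[
h^3\|\mathcal L_K f\|_2^2-\frac{dR(K)}{nh^{d+1}},
\]
and setting it to zero gives $h^{d+4}=dR(K)/(\|\mathcal L_K f\|_2^2 n)$, which is \eqref{eq:multi-hamise}.

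The only point needing care is that $R(K)=\int K^2>0$. This holds because $\int K=1$ forces $K\ne0$ on a set of positive measure, so the stationary point is a genuine interior minimum. The rest is routine algebra, the same as in the proof of Corollary~\ref{cor:amise-bandwidth}.
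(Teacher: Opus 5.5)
Your proposal is correct and matches the paper's proof: substitute $h_n\asymp n^{-1/(d+4)}$ into the two terms of the expansion from Theorem~\ref{thm:multi-amise}, then set the derivative $h^3\|\mathcal L_K f\|_2^2-dR(K)/(nh^{d+1})$ to zero. Your additional checks make explicit what the paper leaves implicit: that $h_n\to0$ and $nh_n^d\to\infty$ so the theorem applies, that the $o\{h^4+(nh^d)^{-1}\}$ remainder is also $O(n^{-4/(d+4)})$, and that strict convexity together with $R(K)>0$ makes the stationary point the global minimiser.
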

\begin{proof}
The scalar-bandwidth AMISE corresponding to Theorem~\ref{thm:multi-amise} is
\[
\AMISE(\widehat f_h)=\frac{h^4}{4}\|\mathcal L_K f\|_2^2+\frac{R(K)}{nh^d}.
\]
If $h_n\asymp n^{-1/(d+4)}$, then
\[
h_n^4=O\left(n^{-4/(d+4)}\right), \qquad (nh_n^d)^{-1}=O\left(n^{-4/(d+4)}\right).
\]
This proves the stated rate. If $\|\mathcal L_K f\|_2>0$, differentiating the AMISE with respect to $h$ gives
\[
h^3\|\mathcal L_K f\|_2^2 -\frac{dR(K)}{nh^{d+1}}= 0.
\]
Hence
\[
h^{d+4} = \frac{dR(K)}{\|\mathcal L_K f\|_2^2 n},
\]
which gives \eqref{eq:multi-hamise}.
\end{proof}

\begin{remark}[\textbf{Full bandwidth matrices}]
The scalar-bandwidth result is included to show how the weak-Hessian argument extends beyond one dimension. Full bandwidth matrices are central in multivariate kernel smoothing, but treating them would require matrix-valued bandwidths and more elaborate curvature functionals. We therefore leave bandwidth-matrix GCPI and anisotropic weak-Hessian AMISE theory for future work.
\end{remark}

\section{Kernel Optimality}
\label{sec:kernel-optimality}

Once the AMISE has the form \eqref{eq:amise-definition}, the usual kernel-optimization argument applies unchanged. The density smoothness affects the factor $R(f'')$, whereas the kernel-dependent part of the optimized AMISE is proportional to
\[
\{\mu_2(K)^2\}^{1/5}R(K)^{4/5} = |\mu_2(K)|^{2/5}R(K)^{4/5}.
\]
Equivalently, one may fix $\mu_2(K)=1$ and minimize $R(K)$ subject to the usual kernel constraints. This gives the classical Epanechnikov-kernel calculation; see \cite{Epanechnikov1969,Silverman1986,WandJones1995}. We therefore state it only as a classical consequence of the weak-curvature AMISE form.

\begin{remark}[\textbf{Classical Epanechnikov consequence}]
\label{rem:epanechnikov}
Among symmetric nonnegative kernels satisfying
\[
\int_{\mathbb{R}}K(u)\,du=1, \qquad \int_{\mathbb{R}}u^2K(u)\,du=1,
\]
the kernel minimizing $R(K)=\int_{\mathbb{R}}K(u)^2\,du$ is the Epanechnikov kernel
\[
    K_E(u)=\begin{cases}
    \displaystyle
    \frac{3}{4\sqrt{5}}\left(1-\frac{u^2}{5}\right),& |u|<\sqrt{5},\\[1ex]
    0, & \text{otherwise.}
    \end{cases}
\]
Thus the classical Epanechnikov kernel continues to minimize the kernel-dependent part of the AMISE once the same AMISE structure has been justified under the weak-curvature formulation.
\end{remark}

\section{Generalized Second-Order Interpretation of the Bias}
\label{sec:generalized-second-order}

The proofs above use weak derivatives and $L^2$ convergence, not generalized Hessians. This is important: the AMISE expansion is an integrated risk statement, and its curvature term is the global weak-curvature functional \eqref{eq:rel_weak_curv}.
Nevertheless, generalized second-order derivatives provide a useful local interpretation of what nonsmooth curvature means for functions in $C^{1,1}$. 

Let $f\in C^{1,1}(\R)$, and let $G$ denote the set of points at which $f$ is twice differentiable. Since $f'$ is locally Lipschitz, Rademacher's theorem implies that $f'$ is differentiable almost everywhere. Thus $G$ has full Lebesgue measure. The generalized second-order derivative of $f$ at $x$ is defined by
\begin{equation}
\label{eq:generalized-second-derivative}
 \partial^2 f(x):=\overline{\co}\left\{\eta\in\mathbb R:\; \exists x_j\in G,\; x_j\to x,\; f''(x_j)\to \eta\right\},
\end{equation}
where $\overline{\co}$ denotes the closed convex hull. This is the one-dimensional version of the Clarke generalized Hessian for $C^{1,1}$ functions; see \cite{Clarke1990,HiriartUrruty1984}.

Because $f'$ is locally Lipschitz, the almost-everywhere derivative $f''$ is locally essentially bounded. Consequently, $\partial^2 f(x)$ is a nonempty compact convex subset of $\mathbb R$. If the classical second derivative exists in a neighborhood of $x$ and is continuous at $x$, then $\partial^2 f(x)=\{f''(x)\}$.
More generally, $\partial^2 f(x)$ records the limiting values of nearby classical second derivatives and then convexifies them. Thus it captures local second-order behavior even when the ordinary second derivative is discontinuous or fails to exist at the point itself.

This generalized object should not be confused with the weak second derivative used in the AMISE calculation. The weak derivative is an almost-everywhere function and is suitable for integration. The generalized derivative $\partial^2 f(x)$ is a local set-valued object. In general, there is no single element $\eta(x)\in\partial^2 f(x)$ that represents the second-order term uniformly inside the KDE kernel integral. The intermediate points in a Taylor-type expansion depend on the integration variable. The weak integral representation used in Lemma~\ref{lem:integral-taylor} avoids this difficulty by averaging the weak second derivative along the relevant line segment.

We now state a simple local bias interpretation. Suppose that the kernel has compact support contained in $[-A,A]$. For $x\in\R$ and $h>0$, define the local generalized curvature bound
\begin{equation}
\label{eq:local-curvature-bound} 
M_h(x) :=\sup\left\{|\eta|:\eta\in\partial^2 f(y),\ |y-x|\le Ah\right\}.
\end{equation}
The local boundedness of $\partial^2 f$ implies that $M_h(x)<\infty$ for sufficiently small $h$. Under the global $C^{1,1}$ assumption used in this paper, one has the simpler uniform bound $M_h(x)\le \operatorname{Lip}(f')$ for all $x\in\mathbb{R}$ and $h>0$.

\begin{proposition}[\textbf{Bias bound by local generalized curvature}]
\label{prop:generalized-curvature-bound}
Let $f\in C^{1,1}(\mathbb{R})$, and let $K$ satisfy Assumption~\ref{ass:kernel} with $\supp K\subset[-A,A]$. Then, for every $x\in\mathbb{R}$ and $h>0$,
\begin{equation}
\label{eq:local-gen-bias-bound} 
\left|\mathbb{E}\widehat f_h(x)-f(x)\right|\le\frac{h^2}{2}M_h(x)\int_{\mathbb{R}}u^2|K(u)|\,du.
\end{equation}
\end{proposition}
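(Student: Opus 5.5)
The plan is to start from the exact bias representation \eqref{eq:bias-integral-representation} obtained in the proof of Theorem~\ref{thm:pointwise-bias}:
\[
\E\widehat f_h(x)-f(x)=h^2\int_{\R}u^2K(u)\int_0^1(1-t)f''(x-thu)\,dt\,du .
\]
It rests only on Lemma~\ref{lem:integral-taylor}, the normalization $\int K=1$ and $\int uK=0$, so it holds for any measurable representative of the weak second derivative. Since $\supp K\subset[-A,A]$, the outer integral may be restricted to $|u|\le A$. For such $u$ and $t\in[0,1]$, the points $y=x-thu$ all lie in the closed interval $I_h(x):=[x-Ah,x+Ah]$. The problem therefore reduces to bounding the weak second derivative essentially on $I_h(x)$ by $M_h(x)$.

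The key step is to show that $|f''(y)|\le M_h(x)$ for almost every $y$ in $I_h(x)$, for a suitable representative of $f''$. I will use the representative given by the classical second derivative on the full-measure set $G$ of twice-differentiability points, setting it to zero elsewhere; this is legitimate because $f'$ is locally absolutely continuous, so its pointwise a.e. derivative is the weak derivative. For $y\in G$, the constant sequence $x_j=y$ shows $f''(y)\in\partial^2 f(y)$ by definition \eqref{eq:generalized-second-derivative}. Hence, for $y\in G\cap I_h(x)$, we get $|f''(y)|\le\sup\{|\eta|:\eta\in\partial^2f(y)\}\le M_h(x)$. Off $G$ the representative vanishes, so the bound holds everywhere on $I_h(x)$ with this choice.

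Next, for fixed $u\ne0$ the map $t\mapsto x-thu$ is affine, so a null-set modification would not matter in any case. With the chosen representative the pointwise bound is valid for every $t$, and for $u=0$ the integrand vanishes because of the factor $u^2$. Combining the pieces gives
\[
|\E\widehat f_h(x)-f(x)|\le h^2\int_{|u|\le A}u^2|K(u)|\int_0^1(1-t)M_h(x)\,dt\,du=\frac{h^2}{2}M_h(x)\int_\R u^2|K(u)|\,du .
\]
This is \eqref{eq:local-gen-bias-bound}; the case $M_h(x)=\infty$ is trivial.

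The only delicate point is the second step: justifying that the weak second derivative is controlled by the Clarke-type set along the segment. I must make sure both that the representative is admissible in Lemma~\ref{lem:integral-taylor} and that points of $G$ genuinely belong to $\partial^2 f$. The latter is immediate from taking constant sequences in the definition of the closed convex hull, so I expect no real obstruction beyond stating the choice of representative carefully.
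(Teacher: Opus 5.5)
Your proposal is correct and follows the paper's proof. Both start from the bias representation \eqref{eq:bias-integral-representation}, use $\supp K\subset[-A,A]$ to confine $x-thu$ to $[x-Ah,x+Ah]$, and bound the a.e.\ second derivative there by $M_h(x)$ through $f''(y)\in\partial^2 f(y)$ for $y\in G$. Your explicit representative (classical $f''$ on $G$, zero off $G$) slightly tightens the paper's ``almost every $y$'' step, because the bound then holds for every $(u,t)$ and no separate null-set argument along the segments is needed.
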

\begin{proof}
From the bias representation~\eqref{eq:bias-integral-representation},
\[
\mathbb{E}\widehat f_h(x)-f(x) =h^2\int_{\mathbb{R}}u^2K(u)\int_0^1(1-t)f''(x-thu)\,dt\,du.
\]
Since $\supp K\subset[-A,A]$, for every $u\in\supp K$ and $t\in[0,1]$ we have
\[
|x-thu-x|=th|u|\le Ah.
\]
Thus $x-thu\in[x-Ah,x+Ah]$. At almost every point $y$ in this interval where $f$ is twice differentiable, the value $f''(y)$ is an element of $\partial^2f(y)$. Hence $|f''(y)|\le M_h(x)$ for almost every such $y$. Therefore,
\[
\begin{aligned}
    \left|\mathbb{E}\widehat f_h(x)-f(x)\right|&\le h^2\int_{\mathbb{R}}u^2|K(u)| \int_0^1(1-t)|f''(x-thu)|\,dt\,du       \\ &\le h^2 M_h(x)\int_{\mathbb{R}}u^2|K(u)|\int_0^1(1-t)\,dt\,du \\
    &=\frac{h^2}{2}M_h(x)\int_{\mathbb{R}}u^2|K(u)|\,du.
\end{aligned}
\]
This proves~\eqref{eq:local-gen-bias-bound}.
\end{proof}

\begin{remark}
Proposition~\ref{prop:generalized-curvature-bound} should be read as a local interpretation of the bias, not as an exact asymptotic expansion. It shows that when the kernel is compactly supported, the pointwise bias at $x$ is controlled by the generalized second-order behavior of $f$ in the local window $[x-Ah,x+Ah]$. Exact AMISE constants, however, require the $L^2$ convergence argument used in Theorem~\ref{thm:amise-expansion}.
\end{remark}

\section{Examples of $C^{1,1}\setminus C^2$ Densities}
\label{sec:examples}

The following examples show that the theory applies to densities that are smooth to first order but have nonsmooth curvature. They can arise from threshold mechanisms, regime changes, robust modeling, or compact-support constructions. In all cases, the density has a Lipschitz first derivative and a square-integrable weak second derivative, but it is not twice continuously differentiable.

\begin{example}[\textbf{A Gaussian density with a curvature kink}]
\label{ex:gaussian-kink}
Let
\[
\phi(x)=\frac1{\sqrt{2\pi}}e^{-x^2/2}, \qquad \psi(x)=\frac{x|x|}{1+x^2}.
\]
For $\varepsilon\in(-1,1)$, define
\begin{equation}
\label{eq:kink-density} 
f_\varepsilon(x)=\phi(x)\{1+\varepsilon\psi(x)\}.
\end{equation}
Since $\psi$ is odd and $\phi$ is even,
\[
\int_{\mathbb{R}}\phi(x)\psi(x)\,dx=0.
\]
Also, $|\psi(x)|\le1$, so $f_\varepsilon(x)\ge0$ for $|\varepsilon|<1$. Hence $f_\varepsilon$ is a probability density.

The function $\psi$ is continuously differentiable and has Lipschitz derivative. Indeed,
\[
\psi(x)=
    \begin{cases}
    \displaystyle \frac{x^2}{1+x^2}, & x\ge0,\\[1ex]
    \displaystyle -\frac{x^2}{1+x^2}, & x<0.
    \end{cases}
\]
Thus $\psi'(0)=0$, and $\psi'$ is continuous. However,
\[
\lim_{x\downarrow0}\psi''(x)=2, \qquad \lim_{x\uparrow0}\psi''(x)=-2,
\]
so $\psi$ is not twice differentiable at $0$. It follows that $f_\varepsilon\in C^{1,1}(\mathbb{R})$ but $f_\varepsilon\notin C^2(\mathbb{R})$ whenever $\varepsilon\ne0$.

At the kink point, the generalized second-order derivative is
\[
\partial^2 f_\varepsilon(0) = \phi(0)[-1-2\varepsilon,\,-1+2\varepsilon].
\]
For numerical work one may use the following a.e. formula. For $x\ne0$,
\[
f_\varepsilon''(x) = \phi(x)\Big[(x^2-1)\{1+\varepsilon\psi(x)\} -2\varepsilon x\psi'(x)+\varepsilon\psi''(x)\Big],
\]
where
\[
\psi'(x)=\operatorname{sgn}(x)\frac{2x}{(1+x^2)^2}, \qquad \psi''(x)=\operatorname{sgn}(x)\frac{2(1-3x^2)}{(1+x^2)^3}, \qquad x\ne0.
\]
The value assigned at $x=0$ is immaterial for Lebesgue integration. The weak second derivative exists almost everywhere and belongs to $L^2(\mathbb{R})$, so Theorems~\ref{thm:mise-bound} and~\ref{thm:amise-expansion} apply.
\end{example}

\begin{example}[\textbf{Huber-type density with Gaussian center and exponential tails}]
\label{ex:huber-density}
Let $c>0$ and define the Huber-type potential
\[
U_c(x) =
    \begin{cases}
    \displaystyle \frac{x^2}{2}, & |x|\le c,\\[1ex]
    \displaystyle c|x|-\frac{c^2}{2}, & |x|>c.
    \end{cases}
\]
Let
\[
f_c(x) = \frac{1}{Z_c}\exp\{-U_c(x)\}, \qquad Z_c=\int_{\mathbb{R}}\exp\{-U_c(t)\}\,dt.
\]
Then $f_c$ is a probability density. It has a Gaussian-type core near the origin and exponential tails outside $[-c,c]$. This type of density is natural in robust modeling: the central part behaves like a Gaussian model, while the tails are heavier and less sensitive to outliers.

The potential $U_c$ is continuously differentiable and has Lipschitz derivative. Indeed,
\[
U_c'(x) =
    \begin{cases}
    x, & |x|<c,\\
    c\,\operatorname{sgn}(x), & |x|>c,
    \end{cases}
\]
with matching one-sided limits at $x=\pm c$. However, $U_c''$ jumps from $1$ to $0$ at $x=\pm c$, so $U_c\notin C^2(\mathbb{R})$.

Since $U_c\in C^{1,1}(\mathbb{R})$ and $U_c'$ is bounded, the density $f_c=Z_c^{-1}e^{-U_c}$ belongs to $C^{1,1}(\mathbb{R})$. For $x\ne\pm c$, its second derivative is given by
\[
f_c''(x) = \{U_c'(x)^2-U_c''(x)\}f_c(x),
\]
where
\[
U_c''(x) =
    \begin{cases}
    1, & |x|<c,\\
    0, & |x|>c.
    \end{cases}
\]
At $x=c$, the one-sided limits of $f_c''$ are
\[
\lim_{x\uparrow c}f_c''(x) = (c^2-1)f_c(c), \qquad \lim_{x\downarrow c}f_c''(x) = c^2 f_c(c),
\]
which are different. Hence $f_c\notin C^2(\mathbb{R})$. The same happens at $x=-c$. Moreover, $f_c''\in L^2(\mathbb{R})$ because it is bounded and has exponential tails. Therefore, the weak-curvature AMISE theory applies to $f_c$.
\end{example}

\begin{example}[\textbf{A threshold-regime density}]
\label{ex:threshold-regime-density}
Let $a\in\mathbb{R}$ and $\lambda>0$. Define the piecewise-quadratic potential
\[
U_{a,\lambda}(x) = \frac{x^2}{2} + \frac{\lambda}{2}(x-a)_+^2, \qquad (x-a)_+=\max\{x-a,0\}.
\]
Let
\[
f_{a,\lambda}(x) = \frac{1}{Z_{a,\lambda}}\exp\{-U_{a,\lambda}(x)\}, \qquad Z_{a,\lambda} = \int_{\mathbb{R}}\exp\{-U_{a,\lambda}(t)\}\,dt.
\]
This is a density whose curvature changes after the threshold $a$. Such a form can be viewed as a simple model for a regime change: before the threshold, the potential is quadratic with one curvature, while after the threshold the curvature increases.

The potential $U_{a,\lambda}$ is continuously differentiable and has Lipschitz derivative. Indeed,
\[
U_{a,\lambda}'(x) = x+\lambda(x-a)_+,
\]
which is continuous and Lipschitz. However,
\[
U_{a,\lambda}''(x) =
    \begin{cases}
    1, & x<a,\\
    1+\lambda, & x>a,
    \end{cases}
\]
for $x\ne a$, so $U_{a,\lambda}$ is not twice continuously differentiable at $a$.

The density $f_{a,\lambda}$ belongs to $C^{1,1}(\mathbb{R})$. For $x\ne a$,
\[
f_{a,\lambda}''(x) = \{U_{a,\lambda}'(x)^2-U_{a,\lambda}''(x)\} f_{a,\lambda}(x).
\]
At the threshold $a$, the one-sided limits of $f_{a,\lambda}''$ differ by
$\lambda f_{a,\lambda}(a)$,
so $f_{a,\lambda}\notin C^2(\mathbb{R})$ whenever $\lambda>0$. Since the density has Gaussian-type tails and the a.e. second derivative is square integrable, we have $f_{a,\lambda}''\in L^2(\mathbb{R})$. Thus Theorems~\ref{thm:mise-bound} and~\ref{thm:amise-expansion} apply.
\end{example}

\begin{example}[\textbf{Compactly supported variants}]
\label{ex:compact-supported-kink}
Let $b\in C^\infty_c(\mathbb{R})$ be nonnegative, satisfy $\int_{\mathbb{R}} b=1$, and be strictly positive on an interval $[-2,2]$. Let $\rho\in C^\infty_c(\mathbb{R})$ be supported in $[-1,1]$ and equal to one in a neighborhood of the origin. Define
\[
q(x)=x|x|\rho(x).
\]
Then $q\in C^{1,1}(\mathbb{R})\setminus C^2(\mathbb{R})$, and the nonsmoothness occurs only at the origin. Since $q$ is bounded and supported where $b$ is bounded away from zero, there exists $\varepsilon_0>0$ such that
\[
b(x)+\varepsilon q(x)\ge0,
\]
for all $x$ whenever $|\varepsilon|<\varepsilon_0$. For such $\varepsilon$, the normalized function
\[
f_\varepsilon(x) = \frac{b(x)+\varepsilon q(x)} {\int_{\mathbb{R}}\{b(t)+\varepsilon q(t)\}\,dt},
\]
is a compactly supported density in $C^{1,1}(\mathbb{R})\setminus C^2(\mathbb{R})$. This gives a compactly supported class of examples covered by Theorems~\ref{thm:mise-bound} and~\ref{thm:amise-expansion}.
\end{example}

The examples above illustrate the range of the weak-curvature formulation. The Gaussian perturbation gives a simple analytic test case, the Huber-type density reflects robust modeling with heavier tails, the threshold-regime density represents a change in curvature after a cutoff, and the compactly supported construction shows that the framework is not limited to densities with full support on $\mathbb{R}$. In each case, the density belongs to $C^{1,1}$, fails to be in $C^2$, and has a square-integrable weak second derivative.

\section{Numerical Illustration}\label{sec:numerics}

This section gives a reproducible illustration of the weak-curvature KDE theory. The aim is threefold. First, we show that a density in $C^{1,1}(\mathbb{R})\setminus C^2(\mathbb{R})$ exhibits the usual second-order KDE risk behavior. Second, we verify that the curvature term used in the AMISE formula can be computed from the a.e. weak second derivative. Third, we illustrate the generalized-curvature plug-in (GCPI) bandwidth selector and compare it with the oracle AMISE bandwidth and Silverman's robust normal-reference rule.

The numerical results are intended as a descriptive check of the theory and the computability of GCPI, not as a claim of uniform finite-sample dominance over existing bandwidth selectors. In particular, GCPI depends on a pilot derivative estimator, a pilot bandwidth, and a truncation safeguard, and its finite-sample behavior can vary with these implementation choices.

All experiments were implemented in Python and run on a laptop equipped with a
12th Gen Intel\textsuperscript{\textregistered}
Core\textsuperscript{\texttrademark} i7-12800H CPU at 1.80 GHz and 16 GB of RAM.
The code used to reproduce the simulation results, tables, and figures in this section is
available at \url{https://github.com/elotfian/kde-c11-regularity}.

\subsection{Kinked-curvature simulation}

We used the density \eqref{eq:kink-density} with $\varepsilon=0.5$, displayed in Figure~\ref{fig:kink-density}. Samples were generated by rejection sampling from the standard normal proposal with envelope constant $1+|\varepsilon|$. A proposal draw $Z$ was accepted with probability
\[
\frac{1+\varepsilon\psi(Z)}{1+|\varepsilon|}.
\]
The random seed was set to $123$.

For each Monte Carlo replication, the integrated squared error was approximated by the trapezoidal rule on an equally spaced grid on $[-6,6]$ with 1201 grid points:
\[
\operatorname{ISE} = \int_{-6}^{6} \{\widehat f_h(x)-f_\varepsilon(x)\}^2\,dx.
\]
The weak curvature $R(f_\varepsilon'')$ used in the oracle AMISE bandwidth was computed by numerical quadrature of the analytic a.e. formula for $f_\varepsilon''$ in Example~\ref{ex:gaussian-kink} on $[-12,12]$ with 200001 grid points. This gave
$R(f_\varepsilon'')\approx 0.325427$.
Thus the curvature entering the oracle bandwidth is the weak-curvature functional
\[
R(f_\varepsilon'') = \int_{\mathbb{R}}\{f_\varepsilon''(x)\}^2\,dx,
\]
with $f_\varepsilon''$ interpreted as the weak second derivative.

For each sample size $n$, we averaged over 500 Monte Carlo replications. When comparing kernels, the bandwidth for each kernel was chosen according to the AMISE rule
\[
h_K = \left\{ \frac{R(K)} {\mu_2(K)^2R(f_\varepsilon'')n} \right\}^{1/5}.
\]

\begin{figure}[ht]
\centering
\includegraphics[width=0.72\linewidth]{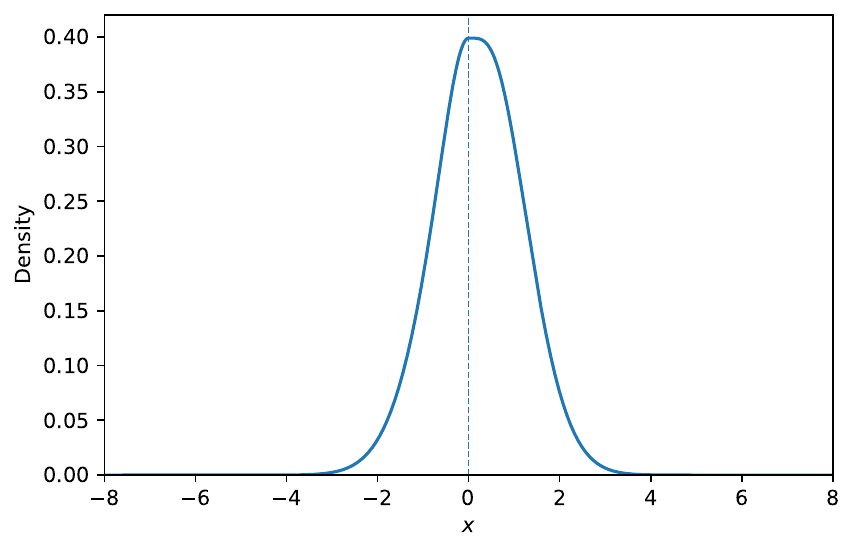}
\caption{A Gaussian density perturbed by an odd curvature-kink term. The density belongs
to $C^{1,1}(\mathbb{R})\setminus C^2(\mathbb{R})$ for $\varepsilon\neq0$, while its weak
second derivative belongs to $L^2(\mathbb{R})$.}
\label{fig:kink-density}
\end{figure}

Table~\ref{tab:simulation} shows the expected qualitative behavior. The mean ISE decreases as $n$ increases for all three kernels, even though the density is not in $C^2$. The finite-sample differences among kernels should not be interpreted as an empirical proof of kernel optimality. The relevant point is more modest: the weak-curvature density behaves consistently with the second-order KDE risk theory.

\begin{table}[ht]
\centering
\caption{Monte Carlo mean integrated squared error for the kinked density
$f_\varepsilon$ with $\varepsilon=0.5$, based on 500 Monte Carlo replications.
Standard errors are shown in parentheses.}
\label{tab:simulation}
\begin{tabular}{llcc}
\toprule
$n$ & Kernel & AMISE bandwidth & Mean ISE \\
\midrule
250  & Epanechnikov & 0.713 & 0.002922 (0.000085) \\
250  & Gaussian     & 0.322 & 0.003050 (0.000087) \\
250  & Biweight     & 0.845 & 0.002934 (0.000086) \\
500  & Epanechnikov & 0.621 & 0.001780 (0.000055) \\
500  & Gaussian     & 0.280 & 0.001858 (0.000056) \\
500  & Biweight     & 0.735 & 0.001787 (0.000055) \\
1000 & Epanechnikov & 0.540 & 0.001050 (0.000028) \\
1000 & Gaussian     & 0.244 & 0.001096 (0.000029) \\
1000 & Biweight     & 0.640 & 0.001055 (0.000028) \\
2000 & Epanechnikov & 0.470 & 0.000606 (0.000015) \\
2000 & Gaussian     & 0.213 & 0.000632 (0.000016) \\
2000 & Biweight     & 0.557 & 0.000608 (0.000015) \\
\bottomrule
\end{tabular}
\end{table}

\subsection{Plug-in bandwidth comparison}

To illustrate the weak-curvature plug-in rule, we compared three Epanechnikov KDE bandwidths: the oracle AMISE bandwidth, the GCPI bandwidth \eqref{eq:gcpi-bandwidth-v5}, and Silverman's robust normal-reference rule. The pilot curvature estimator used the Gaussian pilot kernel.
In the implementation, the diagonal-corrected integrated-square form from Lemma~\ref{lem:diagonal-decomposition} was evaluated by numerical quadrature on the same grid. In the continuum, and up to numerical quadrature error, this corresponds to the leave-one-out U-statistic correction.
We used the pilot bandwidth $b_n=n^{-1/6}$, satisfying $0<1/6<2/9$ as required by Remark~\ref{rem:pilot-assumptions}, and the truncation safeguard $\tau_n=10^{-8}$.

The resulting mean ISE values, based on 500 Monte Carlo replications for each sample size,
are shown in Table~\ref{tab:plugin-simulation} and displayed on a log--log scale in
Figure~\ref{fig:mise-scaling}. This comparison is intended to illustrate finite-sample behavior and computability of the weak-curvature selector; it is not a formal dominance claim. In particular, the finite-sample behavior of GCPI can depend on the pilot bandwidth and the truncation safeguard, and these tuning choices are not systematically optimized here.

\begin{table}[ht]
\centering
\caption{Illustrative comparison of Epanechnikov KDE bandwidth selectors for the kinked
density. The GCPI selector estimates the weak-curvature functional $R(f'')$ using a pilot
curvature estimator. Monte Carlo standard errors are shown in parentheses.}
\label{tab:plugin-simulation}
\resizebox{\linewidth}{!}{%
\begin{tabular}{lcccc}
\toprule
$n$ & Oracle ISE & GCPI ISE & Silverman ISE
& Median $\widehat h_{\operatorname{GCPI}}/h_{\AMISE}$ \\
\midrule
250  & 0.002922 (0.000085) & 0.003186 (0.000095) & 0.007338 (0.000147) & 1.17 \\
500  & 0.001780 (0.000055) & 0.001876 (0.000059) & 0.004365 (0.000089) & 1.14 \\
1000 & 0.001050 (0.000028) & 0.001095 (0.000030) & 0.002460 (0.000044) & 1.12 \\
2000 & 0.000606 (0.000015) & 0.000625 (0.000016) & 0.001420 (0.000023) & 1.11 \\
\bottomrule
\end{tabular}%
}
\end{table}
\begin{figure}[ht]
\centering
\includegraphics[width=0.72\linewidth]{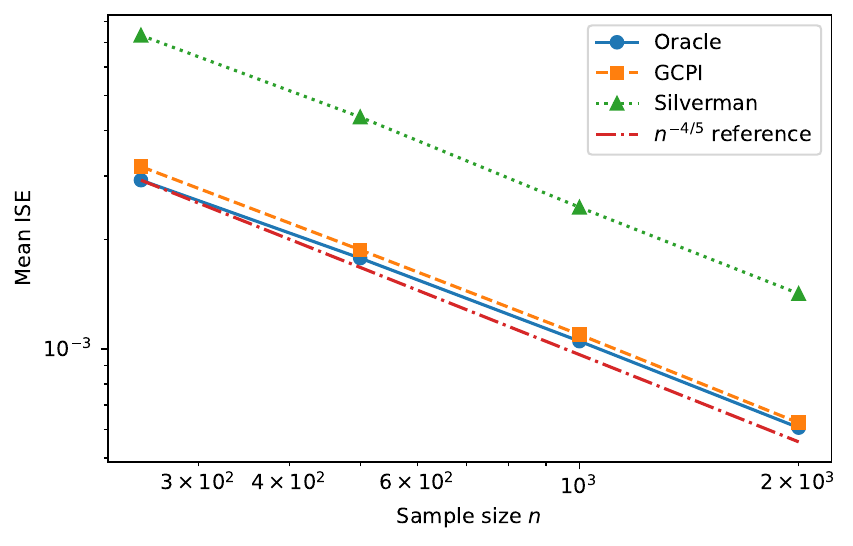}
\caption{Log--log plot of Monte Carlo mean integrated squared error against sample size
for the kinked-curvature density. The reference line has slope $-4/5$, corresponding to
the theoretical second-order KDE rate.}
\label{fig:mise-scaling}
\end{figure}

Table~\ref{tab:plugin-simulation} and Figure~\ref{fig:mise-scaling} are consistent with the theoretical message. The oracle and GCPI curves follow the $n^{-4/5}$ reference trend closely, while Silverman's robust normal-reference rule has a larger finite-sample ISE for this non-Gaussian kinked-curvature density. The GCPI bandwidth is somewhat larger than the oracle AMISE bandwidth in these samples: the median ratio $\widehat h_{\operatorname{GCPI}}/h_{\AMISE}$ decreases from $1.17$ at $n=250$ to $1.11$ at $n=2000$. This mild oversmoothing is consistent with the fact that the raw GCPI curvature estimates are below the true weak-curvature value on average in this run. The plot should therefore be read as a descriptive check that the weak-curvature plug-in selector has risk decay compatible with the second-order AMISE rate, not as a claim of finite-sample optimality.

\subsection{Old Faithful eruption-duration data}

As a small real-data illustration, we applied three bandwidth selectors to the eruption-duration variable in the classical Old Faithful geyser data. The R documentation describes the \texttt{faithful} data as 272 observations on two variables: eruption time in minutes and waiting time to the next 
eruption\footnote{faithful: Old Faithful Geyser Data}. We used only the eruption durations, since they are a standard one-dimensional example for KDE and have a visibly non-Gaussian shape.

For this example, all displayed density estimates use the Epanechnikov kernel. We compare Silverman's robust normal-reference bandwidth, least-squares cross-validation, and the GCPI bandwidth. For the GCPI calculation, the Gaussian pilot kernel was used with pilot bandwidth
\[
b=s_{\mathrm{rob}} n^{-1/9}, \qquad s_{\mathrm{rob}}=\min\{s,\operatorname{IQR}/1.34\},
\]
where $s$ is the sample standard deviation and $\operatorname{IQR}$ is the sample interquartile range. Since the true density is unknown, Table~\ref{tab:faithful-bandwidths} reports the selected bandwidth and the number of visible modes of the resulting density estimate rather than an ISE value. The number of modes is used only as a descriptive diagnostic and depends on the grid and merging tolerance used to summarize the displayed curves.

\begin{table}[ht]
\centering
\caption{Bandwidth selectors for the Old Faithful eruption-duration data. The number of
modes is counted from the displayed Epanechnikov KDE on a fine grid and is intended only as
a descriptive diagnostic.}
\label{tab:faithful-bandwidths}
\begin{tabularx}{\linewidth}{@{}>{\raggedright\arraybackslash}X
                            c
                            c
                            >{\raggedright\arraybackslash}X@{}}
\toprule
Selector & Bandwidth & Modes & Interpretation \\
\midrule
Silverman robust normal-reference&0.335&3&Smooth scale-based baseline with a small shoulder
\\
\addlinespace
Least-squares cross-validation&0.192&8&More variable estimate with several small local modes
\\
\addlinespace
GCPI&0.623&2&Weak-curvature plug-in estimate preserving the main bimodality
\\
\bottomrule
\end{tabularx}
\end{table}

Figure~\ref{fig:faithful-bandwidths} shows the corresponding density estimates. The main qualitative feature of the eruption-duration data is bimodality. The cross-validation bandwidth produces several small local modes, while the GCPI estimate keeps the two dominant modes and avoids fine-scale oscillations. This example does not prove superiority of the proposed selector. Its role is more modest: it shows that the weak-curvature plug-in rule can be computed on a standard real dataset and can produce an interpretable density estimate without relying on a Gaussian reference shape.

\begin{figure}[ht]
\centering
\includegraphics[width=0.9\linewidth]{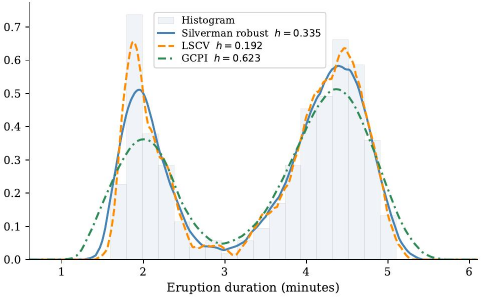}
\caption{Epanechnikov kernel density estimates for the Old Faithful eruption-duration data
using three bandwidth selectors. The GCPI selector estimates the weak-curvature functional,
whereas the normal-reference rule uses a scale approximation and least-squares
cross-validation directly optimizes a sample version of integrated risk.}
\label{fig:faithful-bandwidths}
\end{figure}

\section{Discussion}\label{sec:discussion}

The classical KDE analysis based on a $C^2$ Taylor expansion is mathematically convenient, but stronger than necessary for the second-order AMISE calculation. For the pointwise $O(h^2)$ bias bound, Lipschitz continuity of the first derivative is sufficient. For the sharp integrated AMISE constant, the relevant curvature condition is the square-integrability of the weak second derivative, together with $f\in L^2(\mathbb R)$. Thus the usual AMISE expansion remains valid with the curvature functional $R(f'')$ interpreted in the weak sense, while $C^{1,1}\setminus C^2$ provides a concrete and interpretable nonsmooth class covered by the theory. 

The generalized second-order derivative offers a useful nonsmooth-analysis interpretation of local curvature, especially at points where classical second differentiability fails. However, for integrated risk analysis, weak derivatives and $L^2$ translation continuity provide a cleaner route to exact AMISE constants. This distinction is important: generalized Hessian elements are local set-valued objects, whereas AMISE is an integrated risk criterion depending on the global weak-curvature functional.

Several extensions remain natural. The GCPI result developed here is first-order and proof-of-concept: it does not optimize the pilot bandwidth, derive a limiting distribution for the curvature estimator, or establish adaptive minimax optimality. Future work may develop full bandwidth-matrix and anisotropic versions of the weak-Hessian AMISE expansion, local or adaptive bandwidth rules, and boundary-corrected versions for compactly supported densities whose curvature has nonsmooth features near the boundary. A more complete bandwidth-selection theory would also require sharper analysis of the pilot curvature estimator, including rates, limiting distributions, and fully data-driven pilot selection.

\section*{Statements and Declarations}

\paragraph{Funding}
A. Kabgani was supported by the Research Foundation--Flanders (FWO) through research project G081222N and by the University of Antwerp Research Fund (BOF) through DOCPRO4 project 46929.

\paragraph{Competing interests}
The authors declare that they have no competing interests.

\paragraph{Data availability}
The Old Faithful eruption-duration data used in the real-data illustration are publicly available through the standard \texttt{faithful} dataset in R.

\paragraph{Code availability}
The Python code used to reproduce the simulation results, tables, and figures is available at \url{https://github.com/elotfian/kde-c11-regularity}.

\paragraph{Author contributions}
Both authors contributed to the conception, analysis, writing, and revision of the manuscript.

\end{document}